\newtheorem{introthm}{Theorem}
\newtheorem{introcor}[introthm]{Corollary}
\newtheorem{thm}{Theorem}[section]
\newtheorem{lem}[thm]{Lemma}
\newtheorem{prop}[thm]{Proposition}
\theoremstyle{definition}
\newtheorem{defn}[thm]{Definition}
\theoremstyle{remark}
\numberwithin{equation}{section}
\newcommand{\bN}{{\mathbb N}}
\newcommand{\bR}{{\mathbb R}}
\newcommand{\bZ}{{\mathbb Z}}
\newcommand{\cC}{{\mathcal C}}
\newcommand{\cH}{{\mathcal H}}
\newcommand{\cU}{{\mathcal U}}
\DeclareMathOperator{\Span}{span}
\newcommand{\ip}[1]{\langle #1 \rangle}
\newcommand{\ee}{\varepsilon}
\begin{document}

\title{Simplices of maximally amenable extensions in II$_1$ factors}

\author{Srivatsav Kunnawalkam Elayavalli}
\address{\parbox{\linewidth}{Department of Mathematics, University of California, San Diego, \\
9500 Gilman Drive \# 0112, La Jolla, CA 92093}}
\email{srivatsav.kunnawalkam.elayavalli@vanderbilt.edu}
\urladdr{https://sites.google.com/view/srivatsavke}

\author{Gregory Patchell}
\address{\parbox{\linewidth}{Department of Mathematics, University of California, San Diego, \\
9500 Gilman Drive \# 0112, La Jolla, CA 92093}}
\email{gpatchel@ucsd.edu}
\urladdr{https://sites.google.com/view/gpatchel}

\begin{abstract}
For every $n\in \mathbb{N}$  we obtain a separable II$_1$ factor $M$ and a maximally abelian subalgebra $A\subset M$ such that the space of maximally amenable extensions of $A$ in $M$ is affinely identified with the $n$ dimensional $\mathbb{R}$-simplex.  This moreover yields first examples of masas in II$_1$ factors $A\subset M$ admitting exactly $n$ maximally amenable factorial extensions. Our examples of such $M$ are group von Neumann algebras of free products of lamplighter groups amalgamated over the acting group. A conceptual ingredient that goes into obtaining this result is a \emph{simultaneous relative} asymptotic orthogonality property, extending prior works in the literature. The proof uses technical tools including our \emph{uniform-flattening} strategy for commutants in ultrapowers of II$_1$ factors.
\end{abstract}

\maketitle%


\section{Introduction}

The study of maximally amenable subalgebras has provided inroads to uncovering the structure of II$_1$ factors and has thus seen consistent interest from experts since the early days of the subject. While the existence of such maximally amenable subalgebras follows from Zorn's lemma, it is very unclear as to their structure and position inside the ambient factor. A key early question of Kadison \footnote{Problem 7 from `Problems on von Neumann algebras,' Baton Rouge Conference } asked if every maximally abelian subalgebra (masa) in a II$_1$ factor has to be contained in a copy of the hyperfinite II$_1$ factor. This was settled in the negative by Popa in \cite{Popa1983MaximalIS} where it was shown that the generator masa in the free group factors is in fact maximally amenable. Since this result, there has been a variety of other results proving maximal amenability and absorption of certain canonical subalgebras \cite{WenAOP,2AuthorsOneCup, CFRW, HOUDAYER2014414, Bleary} building on the idea of asymptotic orthogonality from \cite{Popa1983MaximalIS}. Other new ideas for studying maximally amenable subalgebras have since been developed: using studies of central states \cite{BC2015, OzAbsor}, conditional expectations \cite{BH18}, 1-bounded entropy theory \cite{freePinsker}, and upgraded asymptotic freeness \cite{jekel2024upgradedfreeindependencephenomena}.  A crowning conjecture about the structure of maximally amenable subalgebras in free group factors was stated by Peterson and Thom \cite{PetersonThom}. This states that every diffuse amenable subalgebra of $L(\mathbb{F}_2)$ is contained in a \emph{unique} maximally amenable subalgebra. Due to breakthrough works of Hayes \cite{HayesPT} using his 1-bounded entropy techniques \cite{Hayes2018}, and a recent flurry of strong convergence results \cite{bordenave2023norm, PTkilled, magee2024strongasymptoticfreenesshaar, parraud2024spectrumtensorrandomdeterministic, chen2024newapproachstrongconvergence}, this conjecture has been resolved in the positive (see also \cite{HJKEPT} where striking further consequences are discussed). Note that recently, an analogue of this conjecture involving maximally amenable extensions has also been proved in the Type III setting (see \cite{hayes2024generalsolidityphenomenaanticoarse}). 

Motivated by the above phenomena of uniqueness of maximally amenable extensions, it is very natural to wonder about a more quantitative situation. Namely, is it possible to completely understand the space of maximally amenable extensions of masas, and does this study permit any interesting finiteness behaviors. Despite the breadth of the literature surrounding these problems, this particular thread has not been investigated.  In this paper, we make a contribution to this question by fully studying the structure of amenable extensions of certain masas inside certain amalgamated free product II$_1$ factors, opening up seemingly exotic scenarios: 

\begin{introthm}\label{beast1}
Let $N_i \simeq L(\mathbb Z \wr \mathbb Z)$ for $1\leq i\leq k.$ Form the amalgamated free product $M = *_A N_i$ where $A\subset N_i$ is the von Neumann algebra $L(\mathbb Z)$ of the acting group. Then any amenable extension $P\supset A$ contains central projections $p_1,\ldots,p_k \in Z(P)\subset A$ such that $\sum_i p_i = 1$ and $Pp_i \subset p_iN_ip_i$ for all $1\leq i\leq k$. In particular, $A$ has exactly $k$ maximal amenable extensions in $M$ which are factors.
\end{introthm}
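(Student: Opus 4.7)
The strategy is to combine a simultaneous relative asymptotic orthogonality property (AOP) for $A \subset M$ with the amenability of $P$ to produce central projections $p_1,\dots,p_k \in Z(P) \cap A$ summing to $1$ with $Pp_i \subset p_iN_ip_i$. The inclusion $Z(P) \subset A$ is automatic since $A$ is a masa in $M$ and $A \subset P$, giving $Z(P) \subset A' \cap M = A$. A direct group-theoretic check shows the acting copy of $\mathbb{Z}$ is malnormal in $\mathbb{Z} \wr \mathbb{Z}$ (no nonzero finitely supported sequence is $\sigma^n$-invariant for any $n \neq 0$), so $A$ is a malnormal masa in each amenable factor $N_i = L(\mathbb{Z} \wr \mathbb{Z})$. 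I would work throughout with the reduced-word decomposition $L^2(M) = L^2(A) \oplus \bigoplus_w L^2(w)$ of the amalgamated free product over alternating words in the spaces $N_{i_j} \ominus A$.

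The first main step is to establish the simultaneous relative AOP: for any bounded $(y_n) \subset A' \cap M^\omega$ that asymptotically commutes with $P$, and any reduced word $x = n_1 \cdots n_\ell$ with $n_j \in N_{i_j} \ominus A$, one should have $\|xy_n - E_{A^\omega}(y_n)\,x\|_2 \to 0$ at a rate that is uniform in the word pattern $(i_1,\dots,i_\ell)$. The single-algebra AOPs of Popa, Houdayer, and CFRW handle one $N_i$ at a time; the uniform-flattening strategy introduced in the paper is tailored to provide joint control of the commutant $P' \cap M^\omega$ across all $k$ subalgebras simultaneously, which is what eventually enables a global decomposition rather than merely a single intertwining.

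Granted this AOP, I would extract the projections as follows. Amenability of $P$ in $M$ yields a bounded asymptotically $P$-central net in $M^\omega$, or equivalently a $P$-hypertrace $\Phi$. Plugging this net into the simultaneous AOP forces $\Phi$ to annihilate every reduced word of length $\geq 2$. Hence for each $i$ there is a maximal central projection $p_i \in Z(P) \cap A$ such that $Pp_i \subset p_iN_ip_i$. Orthogonality $p_ip_j = 0$ for $i \neq j$ comes from applying the AOP to length-$2$ mixed words $(N_i \ominus A)(N_j \ominus A)$, and $\sum p_i = 1$ follows by reconstructing elements of $P$ from their reduced-word expansions once longer words are shown to vanish under $\Phi$. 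If $P$ is a factor, then $Z(P) = \mathbb{C}$ forces exactly one $p_i = 1$, so $P \subset N_i$; and each $N_i$ is itself an amenable factor containing $A$ and is maximal among such (any strict amenable factor extension would violate $\sum p_j = 1$ with orthogonal $p_j$), yielding exactly $k$ maximal amenable factor extensions.

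\textbf{Main obstacle.} The principal technical difficulty is proving the \emph{simultaneous} AOP with enough uniformity to secure $\sum p_i = 1$ rather than the weaker classical conclusion of a single intertwining $P \preceq_M N_i$. Coherently controlling the commutants $P' \cap M^\omega$ across all $k$ subalgebras at once is precisely the role of the uniform-flattening technique for ultrapowers, and I expect the bulk of the work to lie in setting up and executing this technique in the amalgamated free product setting.
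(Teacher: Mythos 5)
Your high-level plan matches the paper's: establish a simultaneous relative asymptotic orthogonality property for $\{N_i\}_{i=1}^k$ relative to $A$, exploit mixing/malnormality of $K=\mathbb Z$ in each $\mathbb Z\wr\mathbb Z$, and then convert the AOP plus amenability of $P$ into the decomposition $\sum p_i=1$, $Pp_i\subset p_iN_ip_i$. You are also correct to flag the simultaneous AOP as the main technical obstacle, and the paper indeed devotes most of its effort to that (Theorem \ref{thm: asymptotic orthogonality}, via the uniform-flattening strategy). However, the mechanism you propose for passing from the AOP to the structural decomposition diverges from the paper's, and as written it contains a genuine gap.

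The gap is in the step ``Plugging this net into the simultaneous AOP forces $\Phi$ to annihilate every reduced word of length $\geq 2$.'' The AOP takes as hypothesis an element $x\in A'\cap M^{\mathcal U}$ that is already orthogonal to $\overline{\Span}\{N_i^{\mathcal U}\}$. An asymptotically $P$-central net witnessing amenability of $P$ lands in $A'\cap M^{\mathcal U}$ (since $A\subset P$), but there is no a priori reason for it to be orthogonal to the $N_i^{\mathcal U}$; in general it will not be. The paper addresses exactly this point with a separate argument: Lemma \ref{lem: intertwining} (built on the amplification Lemma \ref{lem: amplification}) shows that \emph{if} a corner $Q=Pq$ satisfies $Q\not\prec_M N_i$ for all $i$, then one can produce a unitary $u\in Q'\cap Q^{\mathcal U}$ lying in $M^{\mathcal U}\ominus\bigl(\sum_i N_i^{\mathcal U}\bigr)$, to which the AOP applies and yields a contradiction. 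When instead $Q\prec_M N_i$ for some $i$, the paper runs a completely different argument: Popa's intertwining theorem produces a partial isometry which, together with weak mixing through $A$, enlarges the corresponding projection $p_i$, contradicting its maximality. Both branches are needed; your hypertrace-only route covers neither cleanly. Similarly, your claims that $p_ip_j=0$ ``comes from applying the AOP to length-2 mixed words'' and that $\sum p_i=1$ ``follows by reconstructing elements of $P$ from their reduced-word expansions'' are not supported by the AOP as stated and are not how the paper proceeds — orthogonality and completeness of the $p_i$ are built in from the start by choosing $p_2$ maximal inside $(1-p_1)(P'\cap N)(1-p_1)$, etc., and then arguing by contradiction that no residual projection $q$ survives.

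In short: the AOP you plan to prove is the right statement, but the soft half of the argument requires the intertwining/amplification machinery (Lemmas \ref{lem: amplification}, \ref{lem: intertwining}, Theorem \ref{thm-popa-fundamental}) and a Houdayer-style maximal-projection induction, which your proposal replaces with an unsubstantiated hypertrace argument. You should either supply the missing dichotomy (intertwines vs. does not intertwine into some $N_i$) or adopt the paper's route through Theorem \ref{thm: generalize cyril}.
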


As a consequence of the above result, we are able to identify a natural convex structure on the \emph{space} of maximally amenable extensions of $A\subset M$. To make this precise, we would have to quotient out by unitary conjugacy. We document this below: 

\begin{introcor}\label{coro of beast}
    Let $N_i \simeq L(\mathbb Z \wr \mathbb Z)$ for $1\leq i\leq k.$ and $M = *_A N_i$ where $A\subset N_i$ is the von Neumann algebra $L(\mathbb Z)$ of the acting group. Then the set of maximally amenable subalgebras containing $A$ modulo unitary conjugation in $M$ is affinely identified with the $k$ dimensional $\mathbb{R}$-simplex. 
\end{introcor}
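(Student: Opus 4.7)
The plan is to use Theorem~\ref{beast1} to put every maximally amenable extension $P \supset A$ into a canonical form and then parametrize by the simplex. Since $\mathbb{Z}\wr\mathbb{Z}$ is amenable, each $N_i$ (and each corner $p_iN_ip_i$) is amenable. Combined with Theorem~\ref{beast1}, which yields projections $p_i \in Z(P) \subset A$ with $\sum_i p_i = 1$ and $Pp_i \subset p_iN_ip_i$, maximality of $P$ forces $Pp_i = p_iN_ip_i$ (otherwise one could enlarge the $i$-th piece while remaining amenable and still containing $A$). Hence
\[
P = \bigoplus_{i=1}^{k} p_iN_ip_i, \qquad p_i \in A, \quad \sum_{i} p_i = 1,
\]
and we have a natural map $\Phi(P) := (\tau(p_1),\dots,\tau(p_k))$ to the standard simplex $\Delta := \{(t_i) \in \mathbb{R}_{\geq 0}^k : \sum_i t_i = 1\}$.

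To see that $\Phi$ is surjective onto $\Delta$, for any $(t_i) \in \Delta$ I would choose orthogonal $p_i \in A$ with $\tau(p_i) = t_i$ and set $P := \bigoplus p_iN_ip_i$; this $P$ is manifestly amenable, and maximality follows by taking any amenable $P' \supset P$, applying Theorem~\ref{beast1} to write $P' = \bigoplus q_iN_iq_i$, and observing that for $i \neq j$ and $x \in p_jN_jp_j \subset P'$, the element $xq_i \in (q_iN_iq_i) \cap N_j \subset N_i \cap N_j = A$; this forces the $*$-homomorphism $x \mapsto xq_i$ from the factor $p_jN_jp_j$ into the abelian algebra $Aq_i$ to be zero, so $p_jq_i = 0$, which together with $\sum p_j = \sum q_j = 1$ gives $p_j = q_j$ and $P' = P$. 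For the corresponding unitary conjugacy, given $P = \bigoplus p_iN_ip_i$ and $Q = \bigoplus q_iN_iq_i$ with $\tau(p_i) = \tau(q_i)$, I would pick $v_i \in \mathcal U(N_i)$ with $v_ip_iv_i^* = q_i$ (possible since each $N_i$ is a II$_1$ factor) and form $u := \sum_i q_iv_ip_i \in M$; a direct verification shows $u \in \mathcal U(M)$ and $uPu^* = Q$.

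The main obstacle---and the only delicate step---is verifying that $\Phi$ descends to the quotient by unitary conjugation. Given $u \in \mathcal U(M)$ with $uPu^* = Q$, the minimal central projections must match via a permutation $\sigma \in S_k$, i.e.\ $up_iu^* = q_{\sigma(i)}$, and I must rule out $\sigma \neq \operatorname{id}$. If $\sigma(i) = j \neq i$, then $w := up_i$ is a partial isometry in $M$ satisfying $w(p_iN_ip_i)w^* = q_jN_jq_j$, producing an intertwining $p_iN_ip_i \preceq_M N_j$ in the sense of Popa. By the standard intertwining theory for amalgamated free products (Ioana--Peterson--Popa, Vaes), such an intertwining with $j \neq i$ descends to $p_iN_ip_i \preceq_{N_i} A$, which is impossible: any $*$-homomorphism from the II$_1$ factor $p_iN_ip_i$ into a corner of the abelian algebra $A$ must vanish. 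Hence $\sigma = \operatorname{id}$, $\tau(p_i) = \tau(q_i)$, and $\Phi$ descends to a well-defined bijection onto $\Delta$, giving the desired affine identification.
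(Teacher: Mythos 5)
Your proof is essentially correct and follows the same broad outline as the paper's argument: deduce from Theorem~\ref{beast1} that every maximal amenable extension of $A$ has the form $\bigoplus_{i} p_iN_ip_i$ with $p_i \in A$ and $\sum_i p_i = 1$, then show that two such algebras are unitarily conjugate if and only if $\tau(p_i) = \tau(q_i)$ for all $i$, which identifies the quotient set with the $k$-simplex via $(\tau(p_i))_i$. You are actually more explicit than the paper on two points: (i) you spell out why each $\bigoplus_i p_iN_ip_i$ is genuinely maximal, by running the $*$-homomorphism argument $x\mapsto xq_i$ from the II$_1$ factor $p_jN_jp_j$ into the abelian corner $Aq_i$, a step the paper dispatches with ``it is thus clear''; and (ii) you note at the outset that maximality of $P$ forces $Pp_i = p_iN_ip_i$, which the paper uses implicitly. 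One small imprecision: Theorem~\ref{beast1} gives only the inclusion $P' \subset \bigoplus_i q_iN_iq_i$, not the equality $P' = \bigoplus_i q_iN_iq_i$ that you write, but your subsequent argument uses only the inclusion, so nothing breaks. The unitary you build, $u=\sum_i q_iv_ip_i$ with $v_i\in\mathcal U(N_i)$, is a cosmetic variant of the paper's $u=\sum_i v_i$ with $v_i$ partial isometries in $N_i$; both work.

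The genuine methodological difference is in ruling out a nontrivial permutation $\sigma$. You invoke a general intertwining theorem for amalgamated free products (attributed to Ioana--Peterson--Popa and Vaes) to pass from $p_iN_ip_i \prec_M N_j$ ($j\neq i$) to $p_iN_ip_i \prec_{N_i} A$, and then kill the latter by observing that a II$_1$ factor cannot intertwine into a corner of an abelian algebra (no nonzero normal $*$-homomorphism into a finite type I algebra). The paper instead carries out an elementary word computation: taking $g$ in the lamplighter normal subgroup $\bigoplus_{\mathbb Z}\mathbb Z\subset H_i$, it shows directly that $\|E_{N_{\sigma(i)}}(u_h u_g^n u_k)\|_2 \to 0$ for all group elements $h,k$, since for $n$ large the reduced form of $hg^nk$ always retains a letter in $H_i\setminus K$. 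The fact you cite is true---indeed it follows from the same kind of elementary word analysis in the amalgamated free product---so your route is valid, but the paper's direct computation is self-contained and avoids pinning down the precise statement and hypotheses of a general intertwining theorem. Both approaches lead to the same conclusion.
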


We remark that this result also holds if we do a countably infinite amalgamated free product, in which case the associated simplex is the one with countably infinitely many separated extreme points. Before we move on to describe the key ideas in the proof of our main result, we describe some structural properties of such factors $M$, and an interesting question that is opened up by our work. Firstly they are not free group factors since they are strongly 1-bounded by the join lemma \cite{JungSB}, in fact have $h(M)=0$ in the sense of Hayes \cite{Hayes2018} (note $M$ is Connes embeddable by \cite{ESZ2}). However by results of Ioana \cite{CartanAFP} they are strongly solid (see also \cite{Cyrilsneaky}) and therefore non Gamma (see Proposition \ref{hopefully well known to experts}). Our work inspires a key open question, in the spirit of the free group factor problem, of whether the factors $*_A (N_i)_{i=1}^k$ are isomorphic for different values of k.. Say $m(A)$ is the number of factorial maximal amenable extensions of $A$ in $M$. Our result yields a potential invariant for II$_1$ factors, namely $\theta(M)= \sup_{masa\ A\subset M} m(A)$, and we suspect that this invariant should yield exactly the values $k$ for our factors $M$, thereby distinguishing them. However we are unable to settle this question at present.

\subsection*{Comments on proofs} First we describe the broad strokes for Theorem \ref{beast1}. Our work is inspired in equal parts by the paper of Popa \cite{Popa1983MaximalIS} and by the paper of Houdayer \cite{HOUDAYER2014414}. However, there are new ideas at play in our result, including a crucial use of mixingness in the form of malnormality and the precise structure of the lamplighter groups, and certain ultrapower techniques that were previously developed by us in \cite{patchellelayavalli2023sequential}. The proof splits into two parts: a technical part and a soft part. The technical part involves proving a generalization of asymptotic orthogonality that simultaneously takes into account the span of each of the lamplighter group von Neumann algebra in our amalgamated free product (see \ref{aopsimult}). In order to prove this technical phenomena in our setting, we crucially use our \emph{uniform-flattening} strategy from \cite{patchellelayavalli2023sequential}, where we precisely identify the structure of lifts of commutants in ultrapower II$_1$ factor via careful use of Kaplansky's density theorem. Despite being technical in nature, the conceptual pieces of this argument are isolated to the best of our ability in the third paragraph of proof of Theorem \ref{thm: asymptotic orthogonality}. The second part of the argument, which is the soft part, is an adaptation of work of Houdayer in \cite{HOUDAYER2014414}. We show that in the presence of mild mixing conditions, our simultaneous relative asymptotic orthogonality property yields the structural classification of maximally amenable extensions of $A$. The proof of this result uses an amplification trick pertaining to intertwining. Finally Corollary \ref{coro of beast} is obtained using elementary arguments: patching and Murray-von Neumann equivalence of projections, and also an easy non intertwining calculation (see for instance \cite{gao2024conjugacyperturbationsubalgebras}). We would like to point out that we suspect that Theorem \ref{beast1} may hold in the generality of just considering $N_i\cong R$ and $A$ being an arbitrary mixing masa in $N_i$. As we outline above, in our main result we crucially use the structure of the lamplighter groups to effectively navigate the technical underpinnings of the proof. In the general setting, we do not immediately see an elegant way to tame the technicalities, hence we do not pursue it here.


\subsection*{Acknowledgements} It is our pleasure to thank Matt Kennedy for sharing very insightful suggestions, and Changying Ding for some crucial early discussions and helpful suggestions. We also thank David Jekel for his extremely helpful comments and encouragement. Parts of this work was completed when the first author visited University of Virginia in September 2024, and when the second author visited Banff International Research Station for the Group Operator Algebras program in September 2024. We thank these institutions for their hospitality. The first author acknowledges support from the NSF grant DMS 2350049. The second author was supported in part by NSF grant DMS 2153805.

\section{Preliminaries} 

Let $(M,\tau)$ be a tracial von Neumann algebra, i.e., a pair consisting of a von Neumann algebra $M$ and a faithful normal tracial state $\tau:M\rightarrow\mathbb C$. We denote by $\mathcal U(M)$ the group of unitaries of $M$ and by $M_{\text{sa}}$ the set of self-adjoint elements of $M$. For an ultrafilter $\mathcal U$ on a set $I$, we denote by $M^{\mathcal U}$ the tracial ultraproduct: the quotient $\ell^\infty(I,M)/\mathcal{J}$ by the closed ideal $\mathcal{J}\subset\ell^\infty(I,M)$  consisting of $x=(x_n)$ with $\lim\limits_{n\rightarrow\mathcal U}\|x_n\|_2= 0$.  
We have a natural diagonal inclusion $M\subset M^{\mathcal U}$ given by $x\mapsto (x_n)$, where $x_n=x$, for all $n\in I$. We recall the amalgamated free product construction for tracial von Neumann algebras.
Let $(M_1,\tau_1)$ and $(M_2,\tau_2)$ be tracial von Neumann algebras with a common von Neumann subalgebra $B$ such that ${\tau_1}_{|B}={\tau_2}_{|B}$. We denote by $M=M_1*_{B}M_2$ the amalgamated free product with its canonical trace $\tau$. See \cite{Popa93} and \cite{VDN1992} for more details on the construction.

Let $M$ be a finite von Neumann algebra and $N\subset M$ a von Neumann subalgebra. Recall the inclusion $N\subset M$ is mixing if $L^2(M\ominus N)$ is mixing as an $N$-$N$ bimodule, i.e., for any sequence $u_n\in\mathcal U(N)$ converging to $0$ weakly, one has $\|E_N(xu_ny)\|_2\to 0$ for any $x,y\in M\ominus N$.
When $M$ and $N$ are both diffuse, we may replace sequence of unitaries with any sequence in $N$ converging to $0$ weakly \cite[Theorem 5.9]{DKEP21}. Examples of mixing subalgebras include $M_1$ and $M_2\subset M_1\ast M_2$, where $M_1$ and $M_2$ are diffuse \cite[Proposition 1.6]{jolissaint}  and $L\Lambda\subset L\Gamma$, where $\Lambda<\Gamma$ is almost malnormal (see Proposition 2.4 in \cite{boutonnetcarderimaxamen}). The following is elementary: 

\begin{lem}\label{lem: amalgam-mixing}
    If $A\subset M_i$, $i=1,2$ are mixing inclusions of a von Neumann subalgebra $A$ into finite von Neumann algebras $M_i$, then the inclusion of $A$ into the amalgamated free product $A\subset M_1*_AM_2$ is mixing.
\end{lem}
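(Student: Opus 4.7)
The plan is to prove mixingness directly by induction on the length of alternating reduced words in $M := M_1 *_A M_2$, leveraging the hypothesized mixingness of $A$ inside each $M_i$ at a single ``junction.'' By a standard approximation argument using the reduced-word decomposition of the amalgamated free product, together with the density of bounded alternating words in $M \ominus A$ with respect to $\|\cdot\|_2$, it suffices to verify that $\|E_A(x u_n y)\|_2 \to 0$ for $u_n \in \mathcal{U}(A)$ weakly null and for alternating reduced words $x = x_1 \cdots x_m$, $y = y_1 \cdots y_\ell$, with $x_s \in M_{i_s}\ominus A$, $y_t \in M_{j_t}\ominus A$, and alternating subscripts $i_s \neq i_{s+1}$, $j_t \neq j_{t+1}$.

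The argument then splits on the junction. If $i_m \neq j_1$, then since $u_n \in A \subset M_{j_1}$, the word $x u_n y = x_1 \cdots x_m (u_n y_1) y_2 \cdots y_\ell$ is itself an alternating reduced word of length $m+\ell \geq 2$, so $E_A(xu_ny) = 0$ outright. If $i_m = j_1 =: i$, I would split the colliding product $x_m u_n y_1 \in M_i$ as $v_n + z_n$ where $v_n := E_A(x_m u_n y_1) \in A$ and $z_n := x_m u_n y_1 - v_n \in M_i \ominus A$. The $z_n$-piece contributes $x_1 \cdots x_{m-1} z_n y_2 \cdots y_\ell$, which is an alternating reduced word of length $m+\ell-1$, hence has zero $E_A$ (and when $m=\ell=1$ it is itself an element of $M_i\ominus A$, so again has zero $E_A$). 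The $v_n$-piece contributes $x' v_n y'$ with $x' := x_1 \cdots x_{m-1}$ and $y' := y_2 \cdots y_\ell$.

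The critical use of the hypothesis now enters: mixingness of $A \subset M_i$ applied to the weakly null $u_n$ and to the fixed $x_m, y_1 \in M_i \ominus A$ yields $\|v_n\|_2 \to 0$, so the bounded sequence $v_n \in A$ is weakly null. In the base case $m = \ell = 1$ with $i_1 = j_1 = i$, we have $E_A(xu_ny) = v_n$ and we are done. Otherwise I would invoke the induction hypothesis on the combined-length $(m+\ell-2)$ expression $E_A(x' v_n y')$, using the strengthening recalled immediately before the lemma (valid when $A$ is diffuse, via \cite[Theorem 5.9]{DKEP21}) that mixingness can be tested on bounded weakly null sequences in $A$, not merely on unitaries. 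Thus the induction should actually be phrased for such sequences from the start, and the base case still reduces to the hypothesis on $A\subset M_i$.

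The only real subtlety is bookkeeping for the degenerate boundary cases $m = 1$ or $\ell = 1$ (with $m+\ell \geq 3$), where $x'$ or $y'$ is the empty word. These are immediate: if, say, $m = 1$, then $E_A(xu_ny) = E_A(v_n y') = v_n E_A(y') = 0$ since $y'$ is an alternating reduced word of length $\ell - 1 \geq 1$; symmetrically for $\ell = 1$. Conceptually, the entire proof rests on the observation that the unique obstruction to $xu_ny$ being alternating is the single junction $x_m u_n y_1$, and the hypothesized mixingness of $A \subset M_i$ is precisely what is designed to kill that obstruction.
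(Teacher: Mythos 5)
Your proof is correct, and the key step (splitting the single collision $x_m u_n y_1$ into its $E_A$-part $v_n$ and its orthogonal complement $z_n$, then killing $\|v_n\|_2$ by mixingness of $A\subset M_i$) coincides with the paper's. Where you diverge is in how you finish: you set up an induction on word length, re-feeding the bounded weakly null sequence $v_n$ into a lower-length instance and invoking the strengthened form of mixingness that tests against arbitrary bounded weakly null sequences in $A$ (which, as you note, needs $A$ diffuse). The paper instead stops after the one split and simply estimates the surviving term directly in $L^2$:
\[
\|E_A(x' v_n y')\|_2 \;\le\; \|x' v_n y'\|_2 \;\le\; \|x'\|\,\|v_n\|_2\,\|y'\| \;\longrightarrow\; 0,
\]
since $x'=x_1\cdots x_{m-1}$ and $y'=y_2\cdots y_\ell$ are fixed bounded operators. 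This one-line operator-norm bound makes the induction unnecessary, avoids invoking the diffuse-$A$ strengthening of mixingness (so the lemma holds as stated for arbitrary $A$), and removes the boundary-case bookkeeping. Your recursive argument is sound as written, but after your first application of mixingness you could simply plug in this direct estimate and stop.
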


\begin{proof}
    Let $(u_n)$ be a sequence of unitaries in $A$ converging to 0 weakly. It suffices to check that $\|E_A(xu_ny)\|_2\to 0$ on a $\|\cdot\|_2$-densely spanning set of norm one elements in $M_1*_AM_2 \ominus A.$ $M_1*_AM_2\ominus A$ is densely spanned by words $t_1\cdots t_n$ where $t_j \in M_{i_j}\ominus A$ and $i_{j}\neq i_{j+1}$ for all $j.$ That is, the $t_j$ alternatively come from $M_1\ominus A$ and $M_2\ominus A$. Write $x = x_1\cdots x_r$ and $y = y_1\cdots y_s$ where the $x_j$ and $y_j$ alternatively come from $M_1\ominus A$ and $M_2\ominus A$. There are two cases to consider.

    First, if $x_r \in M_1\ominus A$ and $y_1\in M_2\ominus A$ (or vice versa), then for any $u\in A,$ $x_ru \in M_1\ominus A$ and so $x_1\cdots x_{r-1}(x_ru)y_1\cdots y_s$ is a reduced word in $M_1*_AM_2$ and thus is orthogonal to $A$. In other words, in this case $E_A(xu_ny) = 0$. Second, suppose $x_r,y_1 \in M_1\ominus A$ (or that they are both in $M_2\ominus A$). Since $A\subset M_1$ is mixing, $\|E_A(x_ru_ny_1)\|_2 \to 0.$ Write $x_ru_ny_1 = E_A(x_ru_ny_1) + z_n$ so that $z_n \in M_1\ominus A$. Then as $n\to\infty,$
    \begin{align*}
        \|E_A(xu_ny)\|_2^2 &= \|E_A(x_1\cdots x_ru_ny_1\cdots y_s)\|_2^2 \\
        &= \|E_A(x_1\cdots x_{r-1}E_A(x_ru_ny_1)y_2\cdots y_s)\|_2^2 + \|E_A(x_1\cdots x_{r-1}z_ny_2\cdots y_s)\|_2^2\\
        &\leq \|x_1\cdots x_{r-1}E_A(x_ru_ny_1)y_2\cdots y_s\|_2^2\\
        &\leq \|E_A(x_ru_ny_1)\|_2^2 \|x_1\cdots x_{r-1}\|^2\|y_2\cdots y_s\|^2 \to 0,
    \end{align*}
    demonstrating that the inclusion $A\subset M_1*_AM_2$ is mixing.
\end{proof}

We recall the following additional notion appearing in \cite{popa2008strong}: Let $M$ be a finite von Neumann algebra and $A\subset N\subset M$ be von Neumann subalgebras. We say that $N\subset M$ is weakly mixing through $A$ if there exists a net of unitaries $u_k\in \mathcal{U}(A)$ such that $$\lim_k \|E_N(xu_ky)\|_2\to 0\ \forall x,y\in M\ominus N. $$ 

The following is the well known intertwining theorem from \cite{PopaStrongRigidity}, which we will need in our proof. 

\begin{thm}[\cite{PopaStrongRigidity}]
\label{thm-popa-fundamental}
Let $(M,\tau)$ be a tracial von Neumann algebra, $f\in M$ a nonzero projection, and $P,Q$ be von Neumann subalgebras of $fMf$ and $M$, respectively. Then the following are equivalent:

\begin{enumerate}
    \item[(a)] There is no net $(u_i)$ of unitary elements in $P$ such that for every $x,y\in M,$ $\lim_i \|E_Q(x^*u_iy)\|_2=0$;
    \item[(b)] There exists a nonzero $P$-$Q$-subbimodule $\cH$ of $fL^2(M)$ such that $\dim(\cH_Q)<\infty$;
    \item[(c)] There exists an integer $n\geq 1$, a projection $q\in M_n(Q)$, a nonzero partial isometry $v\in M_{1,n}(fM)$ and a normal unital homomorphism $\theta\colon P\to qM_n(Q)q$ such that $v^*v\leq q$ and $xv = v\theta(x)$ for all $x\in P$;
    \item[(d)] There exist nonzero projections $p\in P$ and $q\in Q$, a unital normal homomorphism $\theta\colon pPp\to qQq$, and a nonzero partial isometry $v\in pMq$ such that $xv=v\theta(x)$ for all $x\in pPp.$ Moreover, $vv^* \in (pPp)'\cap pMp$ and $v^*v\in \theta(pPp)'\cap qMq.$
\end{enumerate}
\end{thm}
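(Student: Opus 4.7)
The plan is to prove the cyclic chain (d) $\Rightarrow$ (a) $\Rightarrow$ (b) $\Rightarrow$ (c) $\Rightarrow$ (d), which is the standard Popa-style strategy for this kind of intertwining theorem. The easy direction (d) $\Rightarrow$ (a) goes by contradiction: given $xv = v\theta(x)$ for $x \in pPp$ with nonzero $v \in pMq$, suppose a net $(u_i) \subset \cU(P)$ witnessed the vanishing in (a). Restricting via $pu_ip$ and computing
\begin{align*}
\|E_Q(v^* u_i v)\|_2 = \|E_Q(\theta(pu_ip) v^*v)\|_2 = \|\theta(pu_ip) v^*v\|_2
\end{align*}
since $\theta(pu_ip) v^*v \in qQq$, and using that $\theta$ is unital so that this quantity stays bounded below by a positive constant as $v^*v \neq 0$, contradicts (a).

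The main content is the implication (a) $\Rightarrow$ (b), which I would prove inside the Jones basic construction $\langle M, e_Q\rangle$ equipped with its canonical semifinite faithful normal trace $\Tr$. The key object is the unique minimal $\|\cdot\|_{2,\Tr}$-norm element $c$ of the closed convex hull $K := \overline{\mathrm{conv}}\{u f e_Q f u^* : u \in \cU(P)\}$ in $L^2(\langle M, e_Q\rangle, \Tr)$; by uniqueness, $c$ commutes with $\cU(P)$, so $c \in P' \cap f \langle M, e_Q\rangle f$. The crux is to show $c \neq 0$. The $\Tr$-pairing $\Tr(u f e_Q f u^* \cdot f e_Q f) = \tau(f E_Q(u^* f u))$ is convex in the $u$-orbit average, so if $c = 0$ one extracts a net of unitaries $u_i \in \cU(P)$ with $\tau(f E_Q(u_i^* f u_i)) \to 0$. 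Expanding against a Pimsner--Popa basis for $fMf$ over $Q$ then produces $x, y \in M$ with $\|E_Q(x^* u_i y)\|_2 \to 0$, violating (a). With $c \neq 0$, a spectral projection $p_c$ is a nonzero $\Tr$-finite projection in $P' \cap f\langle M, e_Q\rangle f$ whose range in $fL^2(M)$ is the desired $P$-$Q$-subbimodule $\cH$ with $\dim(\cH_Q) < \infty$.

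For (b) $\Rightarrow$ (c), I would use the classification of finitely generated right Hilbert $Q$-modules: $\cH \cong q L^2(Q)^n$ for some integer $n \geq 1$ and projection $q \in M_n(Q)$. Transporting the commuting left $P$-action along this isomorphism yields a normal unital homomorphism $\theta: P \to q M_n(Q) q$, and the inclusion $\cH \hookrightarrow fL^2(M)$ is implemented by left multiplication by a row vector $v \in M_{1,n}(fM)$ satisfying $v^*v \leq q$ and $xv = v\theta(x)$. The final step (c) $\Rightarrow$ (d) is a reduction from matrix corners to scalar corners: by Murray--von Neumann equivalence inside $M_n(Q)$ one may diagonalize $q$, and polar-decomposing the resulting entries of $v$ produces a partial isometry in $pMq'$ for suitable projections $p \in P$ and $q' \in Q$ (after shrinking to a corner where $\theta(p)$ acts nondegenerately). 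The commutation conditions $vv^* \in (pPp)' \cap pMp$ and $v^*v \in \theta(pPp)' \cap qMq$ read off immediately from $xv = v\theta(x)$ by taking adjoints.

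The main obstacle is (a) $\Rightarrow$ (b): the convexity/variational argument in the semifinite basic construction and the extraction of a $\Tr$-finite spectral projection representing a genuine subbimodule are the delicate technical steps, particularly in connecting the nonvanishing of $c$ back to the failure of the defining net in (a) via a concrete Pimsner--Popa expansion. The other three implications are essentially bookkeeping once the correct framework is in place.
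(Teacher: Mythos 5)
First, a point of comparison: the paper does not prove this theorem at all --- it is quoted verbatim from Popa's work \cite{PopaStrongRigidity} --- so there is no in-paper argument to measure you against. Your outline is essentially the standard published proof (basic construction $\langle M,e_Q\rangle$, minimal $\|\cdot\|_{2,\Tr}$-norm element of the closed convex hull, spectral projection giving a $\Tr$-finite projection in $P'\cap f\langle M,e_Q\rangle f$, classification of finitely generated right Hilbert $Q$-modules, corner reduction), and that overall architecture is sound.

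However, there is a genuine gap in your cycle at (d) $\Rightarrow$ (a). In (d) the homomorphism $\theta$ is defined only on the corner $pPp$, while the net in (a) consists of unitaries of all of $P$. For $u_i\in\cU(P)$ the element $pu_ip$ is generally not a unitary of $pPp$ and can even be $0$ (e.g.\ when $u_ipu_i^*\perp p$), so the quantity $\|\theta(pu_ip)v^*v\|_2$ has no positive lower bound --- unitality of $\theta$ is irrelevant here. Moreover your identity $\|E_Q(\theta(pu_ip)v^*v)\|_2=\|\theta(pu_ip)v^*v\|_2$ is false in general, since $v^*v$ lies only in $qMq$ (the ``moreover'' clause places it in $\theta(pPp)'\cap qMq$, not in $Q$); the correct computation is $E_Q(\theta(pu_ip)v^*v)=\theta(pu_ip)E_Q(v^*v)$, which again can vanish. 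The easy implication should be run from (c), where $\theta$ is unital on all of $P$: then $\theta(u_i)$ is a unitary of $qM_n(Q)q$ and $E_{M_n(Q)}(v^*u_iv)=E_{M_n(Q)}(v^*v)\theta(u_i)$ has constant $\|\cdot\|_2$ equal to $\|E_{M_n(Q)}(v^*v)\|_2>0$, contradicting the vanishing of all entries $\|E_Q(v_j^*u_iv_l)\|_2$. Getting from (d) back to (c) (equivalently, promoting intertwining of the corner $pPp$ to $P$) is itself a real step, done by patching $\theta$ along partial isometries $w_j\in P$ with $w_j^*w_j\le p$ and $\sum_j w_jw_j^*$ the central support of $p$, setting $\Theta(x)=(\theta(w_l^*xw_j))_{l,j}$ and $V=(w_1v,\dots,w_nv)$, with a cutting argument when infinitely many $w_j$ are needed; your proposal omits this entirely. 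Two smaller points: in (a) $\Rightarrow$ (b), pairing only against $fe_Qf$ controls $\|E_Q(u_i)\|_2$-type quantities, whereas refuting (a) requires a single net working for \emph{all} $x,y\in M$, so one must pair the convex combinations against $\sum_{x\in F}xe_Qx^*$ over finite $F\subset M$ and index the net by $(F,\varepsilon)$; and in (b) $\Rightarrow$ (c) one needs the standard left-boundedness cut so that the entries of $v$ land in $M$ rather than merely in $L^2(M)$.
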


\begin{lem}\label{lem: wk-mixing-absorb}
    If $N\subset M$ is weakly mixing through $A$ and $x\in M$ is such that there exist $y_1,\ldots,y_n$ satisfying $Ax \subset \sum_{j=1}^n y_jN$, then $x\in N.$ In particular, $A'\cap M\subset N.$
\end{lem}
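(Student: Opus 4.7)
The plan is to decompose $x = E_N(x) + x_0$ with $x_0 := x - E_N(x) \in M \ominus N$ and show $x_0 = 0$. First I would clean up the generators: setting $y_j^0 := y_j - E_N(y_j) \in M \ominus N$, the containments $y_j N \subset N + y_j^0 N$ and $AE_N(x) \subset N$ turn the hypothesis into $Ax_0 \subset N + \sum_{j=1}^n y_j^0 N$. Since $ux_0 \in M \ominus N$ for any $u \in A \subset N$, applying $E_N$ to any such decomposition of $ux_0$ forces its $N$-component to vanish, yielding the cleaner inclusion $Ax_0 \subset \sum_{j=1}^n y_j^0 N$ with all $y_j^0 \in M \ominus N$.

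Next, I would consider $K := \overline{\sum_j y_j^0 N}^{\|\cdot\|_2} \subset L^2(M) \ominus L^2(N)$, a finitely generated right Hilbert $N$-submodule with $\dim_N K \leq n$. By standard Jones/Pimsner-Popa theory there is a finite Pimsner-Popa basis $\eta_1, \ldots, \eta_l \in K \cap M \subset M \ominus N$ with $e_K = \sum_{i=1}^l \eta_i e_N \eta_i^*$ in the basic construction $\langle M, e_N \rangle$, which yields the Parseval-type identity $\|\xi\|_2^2 = \sum_{i=1}^l \|E_N(\eta_i^* \xi)\|_2^2$ for every $\xi \in K$. Applying this to $\xi = u_k x_0 \in K$, where $\{u_k\} \subset \mathcal{U}(A)$ is the net witnessing weak mixing through $A$, and using $\|u_k x_0\|_2 = \|x_0\|_2$, gives
\[
\|x_0\|_2^2 = \sum_{i=1}^l \|E_N(\eta_i^* u_k x_0)\|_2^2.
\]
Since $\eta_i, x_0 \in M \ominus N$, the weak mixing hypothesis directly forces $\|E_N(\eta_i^* u_k x_0)\|_2 \to 0$ along the net for each $i$; as the sum is finite, passing to the limit yields $\|x_0\|_2 = 0$, hence $x \in N$. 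The ``in particular'' clause then follows at once: any $x \in A' \cap M$ satisfies $Ax = xA \subset xN$, fitting the main statement with $n = 1$ and $y_1 = x$.

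The main technical obstacle is setting up the finite Pimsner-Popa basis with elements lying in $M \ominus N$: one needs both finite cardinality (from $\dim_N K < \infty$) and $\eta_i \in M$ rather than just $L^2(M)$. Both points are standard in the finite-$N$-dim Hilbert module theory of Jones/Pimsner-Popa, but the construction via a Gram-Schmidt-type procedure together with appropriate spectral cutting requires some care to ensure the sum in the Parseval identity really is finite, so that the termwise weak-mixing convergence can be interchanged with it.
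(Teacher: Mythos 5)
Your argument is correct, but it is worth noting that the paper's own ``proof'' consists of a single citation to Proposition 6.14 of Popa's spectral gap paper, so you have in effect reconstructed the underlying standard argument rather than mirrored the paper. Your route is the classical one: reduce to $Ax_0\subset\sum_j y_j^0N$ with everything orthogonal to $N$, pass to the finitely generated right $N$-module $K$, invoke a Pimsner--Popa basis $\eta_1,\dots,\eta_l$ with the associated Parseval identity, evaluate at $u_kx_0\in K$, and let weak mixing through $A$ kill every term. This is exactly what Popa's cited proposition packages.

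Two small remarks on the technical point you flag. First, you do not actually need the basis vectors $\eta_i$ to lie in $M$: even if they are only in $L^2(M)\ominus L^2(N)$, you can replace each $\eta_i$ by a bounded approximant $\eta_i'\in M\ominus N$ with $\|\eta_i-\eta_i'\|_2<\varepsilon$, and then the error term $\|E_N((\eta_i-\eta_i')^*u_kx_0)\|_2\le\|\eta_i-\eta_i'\|_2\,\|x_0\|_\infty\le\varepsilon\|x_0\|_\infty$ is uniform in $k$, so letting $k\to\infty$ first and then $\varepsilon\to0$ gives $\|x_0\|_2=0$ without ever needing the bounded-basis refinement. Second, the reduction step $Ax_0\subset\sum_j y_j^0N$ uses $E_N(y_j^0n_j)=E_N(y_j^0)n_j=0$ via $N$-bimodularity and $E_N(ux_0)=uE_N(x_0)=0$ since $u\in A\subset N$; you state this cleanly, just be sure to spell out the bimodularity when writing it up. With these in place the proof is complete and correct, and the ``in particular'' clause follows exactly as you say from $Ax=xA\subset xN$.
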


\begin{proof}
    This is an immediate application of Proposition 6.14 of \cite{popa2008strong}.
\end{proof}

\begin{lem}\label{prop: masa}
    Let $N_i \simeq L(\mathbb Z \wr \mathbb Z)$ for $1\leq i\leq k.$ Form the amalgamated free product $M = *_A N_i$ where $A\subset N_i$ is the von Neumann algebra $L(\mathbb Z)$ of the acting group. Then $A$ is a MASA in $M.$ 
\end{lem}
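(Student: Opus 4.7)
The plan is to upgrade mixingness of $A \subset N_i$ to mixingness of $A \subset M$ via Lemma \ref{lem: amalgam-mixing}, and then to conclude the MASA property directly from Lemma \ref{lem: wk-mixing-absorb} applied with $N = A$.

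First, I would verify that the acting subgroup $H = \mathbb{Z}$ is malnormal in $G = \mathbb{Z} \wr \mathbb{Z}$. Writing elements as $(f, n) \in \bigoplus_{\mathbb{Z}} \mathbb{Z} \rtimes \mathbb{Z}$ and letting $\sigma_n$ denote the shift action, a direct computation yields
\[
(f, n)(0, m)(f, n)^{-1} = (f - \sigma_m(f), m).
\]
For this element to belong to $H$ one needs $\sigma_m(f) = f$, which for any finitely supported $f \neq 0$ forces $m = 0$. Hence for any $g = (f, n) \notin H$ one has $gHg^{-1} \cap H = \{e\}$, so $H$ is malnormal in $G$. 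By Proposition 2.4 of \cite{boutonnetcarderimaxamen}, the inclusion $A \subset N_i$ is therefore mixing for each $i$.

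Second, Lemma \ref{lem: amalgam-mixing} immediately promotes this to mixingness of $A \subset M = *_A N_i$. Since $A \simeq L(\mathbb{Z})$ is diffuse, there exists a sequence of unitaries in $\mathcal{U}(A)$ converging to $0$ weakly, which witnesses that $A \subset A$ is weakly mixing through $A$ in the sense preceding Lemma \ref{lem: wk-mixing-absorb}. Applying that lemma with $N = A$ yields $A' \cap M \subset A$, so $A$ is a MASA in $M$. (Alternatively, one could argue by hand: for $x \in A' \cap M$ decompose $x = E_A(x) + y$ with $y \in M \ominus A$; mixingness gives $\|E_A(y^* u_n y)\|_2 \to 0$, while $y^* u_n y = u_n y^* y$ forces $\|E_A(y^*y)\|_2 = 0$, so $y = 0$.)

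The argument is essentially mechanical given the preliminaries; the only manual input is the malnormality computation in the lamplighter group, so I do not anticipate any serious obstacle.
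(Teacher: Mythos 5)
Your proof is correct and follows essentially the same route as the paper, which deduces the MASA property directly from Lemma \ref{lem: amalgam-mixing} together with Lemma \ref{lem: wk-mixing-absorb} (applied with $N=A$). You simply make explicit the ingredients the paper leaves implicit, namely the malnormality computation in $\mathbb{Z}\wr\mathbb{Z}$ giving mixingness of $A\subset N_i$ and the passage from mixing to weak mixing through $A$.
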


\begin{proof}
    This follows immediately from the above Lemma \ref{lem: wk-mixing-absorb} and Lemma \ref{lem: amalgam-mixing}.
\end{proof}

\begin{thm}\label{thm: adrian}
    Let $N_i \simeq L(\mathbb Z \wr \mathbb Z)$ for $1\leq i\leq k.$ Form the amalgamated free product $M = *_A N_i$ where $A\subset N_i$ is the von Neumann algebra $L(\mathbb Z)$ of the acting group. Then $M$ is a non Gamma II$_1$ factor; it is moreover strongly solid.
\end{thm}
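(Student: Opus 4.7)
My plan is to split the statement into three pieces: factoriality of $M$, strong solidity via Ioana's theorem, and non-Gamma as a formal consequence of the first two.

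For factoriality, Proposition \ref{prop: masa} already gives that $A \subset M$ is a MASA, so $Z(M) \subset A$. Any element $z \in Z(M)$ must in particular commute with each $N_i$, and since each $N_i \cong L(\bZ \wr \bZ)$ is a II$_1$ factor (the group $\bZ \wr \bZ$ being ICC and amenable, so $N_i \cong R$), one has $z \in Z(N_i) = \bC$. Hence $M$ is a II$_1$ factor, and I can then turn to structural properties.

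For strong solidity, I would invoke Ioana's theorem from \cite{CartanAFP} (and its refinements in \cite{Cyrilsneaky}) directly on the amalgamated free product $M = *_A N_i$. The input one must check is: the common amalgam $A = L(\bZ)$ is amenable because it is abelian; each inclusion $A \subset N_i$ is a proper mixing inclusion coming from the wreath product structure of $\bZ \wr \bZ$, as packaged through Lemma \ref{lem: amalgam-mixing}; and each $N_i$ is hyperfinite, hence trivially strongly solid. For $k \geq 3$ I would either invoke the $k$-fold version of Ioana's theorem or iterate the two-fold case by regrouping $M = (*_A^{k-1} N_i) *_A N_k$, verifying at each stage that the amalgam remains an amenable masa in each piece.

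Non-Gamma then follows from Proposition \ref{hopefully well known to experts}, provided $M$ is non-amenable. Non-amenability is immediate from the amalgamated free product structure: the amalgam $A$ is a proper subalgebra of each $N_i$, so $M$ contains a properly reduced amalgamated free product over an abelian amalgam with multiple noncommuting pieces, which is incompatible with amenability. The main obstacle I anticipate is purely bookkeeping: verifying that Ioana's stated hypotheses in \cite{CartanAFP} apply verbatim to our amenable-over-abelian setup; if the match is not direct, the Bernoulli-crossed-product description $L(\bZ \wr \bZ) \cong L^\infty(\bT^{\bZ}) \rtimes \bZ$ places each $N_i$ squarely in the scope of the Popa--Vaes-type strong-solidity results recorded in \cite{Cyrilsneaky}, which should close any gap.
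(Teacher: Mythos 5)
Your overall route is the same one the paper takes: strong solidity is quoted from Ioana's Theorem 1.8 in \cite{CartanAFP}, and non-Gamma then follows from Proposition \ref{hopefully well known to experts} once one notes that $M$ is nonamenable and that strong solidity implies solidity. Your factoriality argument (use Lemma \ref{prop: masa} to conclude $Z(M)\subset A\subset N_i$, then $Z(M)\subset Z(N_i)=\bC$ since each $N_i$ is a II$_1$ factor) is correct and is a reasonable way to fill in a step the paper leaves implicit; and your remark that nonamenability of $M$ can be seen directly from the amalgamated free product structure (e.g. $G$ contains a free subgroup generated by a lamp element from $H_1$ and one from $H_2$) is sound even if stated informally.

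The one genuine flaw is your proposed fallback for $k\ge 3$: regrouping $M=(*_A^{k-1}N_i)*_A N_k$ and ``iterating the two-fold case'' does not work, because the two-fold version of Ioana's theorem requires \emph{both} legs over $A$ to be amenable, and $*_A^{k-1}N_i$ is nonamenable as soon as $k-1\ge 2$. You must therefore rely on the multi-piece version of Ioana's result rather than an induction, which is indeed what the paper's citation presupposes. If you keep the alternative, you would need a strong-solidity theorem permitting one nonamenable strongly solid piece, which is not what is being cited.
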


\begin{proof}
    This is immediate from Theorem 1.8 in \cite{CartanAFP}, see also \cite{Cyrilsneaky}.
\end{proof}

\begin{thm}[\cite{Bleary}]\label{thm: BLeary}
    Let $N_i \simeq L(\mathbb Z \wr \mathbb Z)$ for $1\leq i\leq k.$ Form the amalgamated free product $M = *_A N_i$ where $A\subset N_i$ is the von Neumann algebra $L(\mathbb Z)$ of the acting group. Then each $N_i$ is maximal amenable in $M$.
\end{thm}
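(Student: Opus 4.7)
The strategy is to follow the central-state approach of Boutonnet--Carderi \cite{boutonnetcarderimaxamen}, adapted to the amalgamated free product setting, which is precisely what is carried out in \cite{Bleary}. Fix $i$ and suppose $P \supset N_i$ is an amenable intermediate subalgebra of $M$; the goal is to show $P = N_i$.

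The first ingredient is mixing. The acting copy of $\mathbb{Z}$ is malnormal in $\mathbb{Z}\wr\mathbb{Z}$: conjugating a power $t^s$ by any nontrivial element from the base lattice $\bigoplus_{\mathbb{Z}}\mathbb{Z}$ shifts its support and so lands outside $\langle t\rangle$ for every $s\neq 0$. This gives that $A \subset N_j$ is mixing for each $j$, and hence by Lemma \ref{lem: amalgam-mixing} the inclusion $A \subset M$ is mixing as well. Second, since $P$ is amenable and $N_i \subset P$, Connes' characterization of amenability produces a $P$-central state $\phi$ on $B(L^2(M))$ that restricts to $\tau$ on $M$. In particular $\phi$ is $N_i$-bimodular, i.e.\ $\phi(L_a T L_b) = \phi(L_b L_a T)$ for $a,b \in N_i$.

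The core step is to show that $\phi(L_x) = 0$ for every $x \in M \ominus N_i$. By the amalgamated free product decomposition of $M$, such $x$ is $\|\cdot\|_2$-approximated by finite sums of reduced words $w = x_1 \cdots x_r$ with $x_\ell \in N_{j_\ell} \ominus A$, $j_\ell \neq j_{\ell+1}$, and (using $N_i$-bimodularity of $\phi$ to absorb any outer $N_i$-letters into the test) one may assume the outermost letters come from some $N_{j_\ell}$ with $j_\ell \neq i$. For a sequence of unitaries $u_n \in \mathcal{U}(A)$ converging weakly to $0$, $N_i$-bimodularity gives $\phi(L_w) = \phi(L_{u_n w u_n^*})$, while mixing of $A \subset N_{j_1}$ (applied to the outermost factors) together with Lemma \ref{lem: amalgam-mixing} forces $\|E_{N_i}(u_n w u_n^*)\|_2 \to 0$. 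Combining the two and using that $\phi$ restricts to $\tau$ on $M$ yields $\phi(L_w) = 0$. A standard Jones basic construction argument now gives $P \subset N_i$, hence $P = N_i$.

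The main obstacle is the iterative bookkeeping when a reduced word $w$ has $N_i$-letters in its interior, so that peeling off the outermost $N_{j_1}$-factor via $A$-unitary conjugation leaves a word that is still not purely in $\bigcup_{j\neq i} N_j$. One must recursively alternate $N_i$-bimodularity of $\phi$ (to shuttle $N_i$-letters) with mixing of $A \subset N_{j_\ell}$ (to kill the exposed $N_{j_\ell}$-letters), controlling the word length combinatorially. This delicate induction on word length is the technical heart of \cite{Bleary}, and is what the authors will in any case need to strengthen to prove the ``simultaneous relative'' AOP underlying Theorem \ref{beast1}.
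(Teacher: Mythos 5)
There is a genuine gap, and it is worth separating it from the citation issue. The paper itself does not reprove the result of \cite{Bleary}: its proof of Theorem \ref{thm: BLeary} consists of checking the hypothesis of Theorem 1.1 of \cite{Bleary}, namely that $N_i \not\prec_{N_i} A$ (automatic here, since a corner of the II$_1$ factor $N_i$ cannot embed into an amplification of the abelian algebra $A$), and then quoting that theorem. Had you simply verified this non-intertwining condition and invoked \cite{Bleary}, you would have matched the paper. Instead you attempt to sketch the internals of a central-state proof, and that sketch does not work as written: since your $P$-central state $\phi$ is assumed to restrict to $\tau$ on $M$, the ``core step'' $\phi(L_x)=0$ for all $x\in M\ominus N_i$ is vacuously true ($\phi(L_x)=\tau(x)=0$) and carries no information about $P$. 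Likewise the chain $\phi(L_w)=\phi(L_{u_n w u_n^*})$, followed by $\|E_{N_i}(u_n w u_n^*)\|_2\to 0$, only re-derives $\tau(w)=0$. In a Boutonnet--Carderi style argument the content lies in the values of $\phi$ on operators \emph{outside} $M$ --- e.g.\ on the Jones projection $e_{N_i}$ in the basic construction $\langle M, e_{N_i}\rangle$, where one shows $\phi(e_{N_i})=1$ and exploits $P$-centrality to produce a nonzero $P$-invariant piece of $L^2(N_i)$, which is then upgraded to $P\subset N_i$ using weak mixing of $A\subset M$ and intertwining. No ``standard Jones basic construction argument'' converts the trivial statement $\phi|_{M\ominus N_i}=0$ into $P\subset N_i$, so the conclusion does not follow from what you establish.

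Beyond this, your final paragraph explicitly defers the ``delicate induction on word length'' --- which you correctly identify as the technical heart --- to \cite{Bleary}, so even granting the framework, the proposal is a plan rather than a proof. The mixing/malnormality observations at the start are fine (and are indeed used elsewhere in the paper, via Lemma \ref{lem: amalgam-mixing} and Proposition 2.4 of \cite{boutonnetcarderimaxamen}), but mixing alone is not what Theorem 1.1 of \cite{Bleary} asks you to check for this statement: the missing verification is the non-intertwining condition $N_i\not\prec_{N_i}A$, which your write-up never mentions.
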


\begin{proof}
    This is nearly immediate from Theorem 1.1 of \cite{Bleary}; note that $N_i \not\prec_{N_i} A$ for all $i.$ See also \cite{BH18}. 
\end{proof}

The following is well known to experts. We thank Changying Ding for pointing out to us that it is an immediate consequence of Theorem 3.1 in \cite{Uedafree}. The argument we provide here comes from salvaging the idea in Proposition 7 of \cite{OzawaSolidActa}, which unfortunately remains incorrect as stated. We nevertheless include this proposition here for the benefit of the community.

\begin{prop}\label{hopefully well known to experts}
    If $M$ is a solid and nonamenable II$_1$ factor, then $M$ is non-Gamma.
\end{prop}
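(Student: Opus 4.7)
We argue the contrapositive: assume that $M$ is a solid II$_1$ factor with property Gamma, with the aim of showing $M$ is amenable (which would contradict the standing nonamenability hypothesis in the contrapositive setup).

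The strategy is to use Gamma to manufacture, inside $M$ itself, diffuse abelian subalgebras whose relative commutants are amenable by solidity and simultaneously approximate $M$ in $\|\cdot\|_2$. Given a finite set $F\subset M$ and $\varepsilon>0$, we aim to produce a diffuse abelian subalgebra $B=B_{F,\varepsilon}\subset M$ with the following two properties: (i) every unit-norm $b\in B$ satisfies $\|[b,x]\|_2<\varepsilon$ for all $x\in F$, and (ii) the spectral data of $B$ relative to its conjugation action on $L^2 M$ is sufficiently spread out that smallness of the commutators $\|[b,x]\|_2$ over $b$ in the unit ball of $B$ forces $x$ to be $\|\cdot\|_2$-close to the relative commutant $Q:=B'\cap M$. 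Property (i) follows, more or less directly, from the standard consequence of Gamma that trace-zero asymptotically central unitaries exist in $M$; property (ii) is more delicate and requires combining such a central unitary with a commuting Haar unitary drawn from a diffuse MASA of $M$, then using functional calculus to guarantee that $B$ has no pathological spectral concentration.

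Once $B$ is in hand, solidity of $M$ applied to the diffuse (abelian) subalgebra $B$ gives that $Q=B'\cap M$ is amenable. Conditions (i) and (ii) together yield a quantitative estimate of the form $\|x-E_Q(x)\|_2\leq C\varepsilon$ for $x\in F$ and a universal constant $C$. Letting $F$ increase along a weakly dense sequence in $M$ and $\varepsilon\to 0$, we obtain a net of amenable subalgebras of $M$ whose conditional expectations converge pointwise in $\|\cdot\|_2$ to the identity on $M$. A standard semidiscreteness argument (amenability passes through such weak inductive approximations by amenable subalgebras) now forces $M$ itself to be amenable.

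The main technical obstacle is the simultaneous control of conditions (i) and (ii). Achieving approximate centrality is immediate from Gamma, but the naive choice of $B$ (e.g.\ the von Neumann algebra generated by a single asymptotically central unitary) may be finite-dimensional, or have atomic spectrum, so solidity cannot be applied or the spectral-gap estimate passing from $\|[b,x]\|_2$ to $\|x-E_Q(x)\|_2$ breaks down. This is the gap in Ozawa's original Proposition~7 of \cite{OzawaSolidActa}; the correction comes from the approach of Theorem~3.1 of \cite{Uedafree}, which organizes the construction so that the diffuse abelian $B$ inherits both the approximate centrality and the necessary spectral spread.
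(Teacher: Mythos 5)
Your high-level strategy (proving the contrapositive as ``solid $+$ Gamma $\Rightarrow$ amenable'' by approximating $M$ with amenable relative commutants of diffuse abelian subalgebras) is different in shape from the paper's proof. The paper instead fixes a finite set $\{x_1,\ldots,x_p\}$ generating a nonamenable subalgebra and uses Connes' perturbation theorem to choose $\varepsilon > 0$ once and for all, so that it suffices to build a \emph{single} diffuse abelian $B$ with $\|E_{B'\cap M}(x_j)-x_j\|_2 \le \varepsilon$; this immediately exhibits a diffuse abelian subalgebra with nonamenable relative commutant and contradicts solidity, with no need for the diagonal argument over increasing $F$ and shrinking $\varepsilon$ or for the semidiscreteness approximation lemma your plan implicitly invokes.

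More importantly, the core technical step in your plan is not carried out, and the route you gesture at does not work. Your property (ii) asks for a spectral-gap-type phenomenon: that approximate commutation of $x$ with the unit ball of $B$ should force $\|x - E_{B'\cap M}(x)\|_2$ small. If $B$ is generated by a single Haar unitary $u$, then $E_{B'\cap M}(x)$ is an ergodic average $\lim_N \frac{1}{2N+1}\sum_{|n|\le N} u^n x u^{-n}$, and $\|[u,x]\|_2 < \varepsilon$ gives only the linear estimate $\|u^nxu^{-n}-x\|_2 \le n\varepsilon$, so the average can drift arbitrarily far from $x$ --- precisely the failure mode that invalidates the naive argument. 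The paper sidesteps this by working with order-$2$ unitaries (symmetries) $u_m$: for a symmetry, $E_{\{u_m\}'\cap M}(x) = \tfrac12(x + u_mxu_m)$ is an \emph{exact} two-term average, yielding the clean bound $\|E_{\{u_m\}'\cap M}(x)-x\|_2 \le \tfrac12\|[u_m,x]\|_2$. To make the resulting $B$ diffuse while keeping the commutator bounds summable, the paper then constructs, by a bootstrap in the ultrapower $M^\cU$ using diffuseness of $M'\cap M^\cU$, a whole sequence of pairwise commuting, pairwise orthogonal, trace-zero symmetries with geometrically decaying commutator estimates, and telescopes the nested conditional expectations $E_{B_{m+1}'\cap M} = E_{B_m'\cap M}\circ E_{A_{m+1}'\cap M}$ to control $E_{B'\cap M}$. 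Neither the symmetry trick nor the ultrapower bootstrap appears in your proposal, and the suggestion to ``combine a central unitary with a commuting Haar unitary from a diffuse MASA'' does not address the missing spectral-gap estimate: if the Haar unitary is not itself asymptotically central then property (i) breaks, and if it is, you are back to the failing Haar-unitary averaging. The gap is that the construction of $B$ --- the actual content of the proposition --- is deferred entirely to a citation rather than supplied.
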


\begin{proof}
    Suppose that $M$ has property Gamma and is nonamenable. We will show that $M$ is not solid. There is a finite set $\{x_1,\ldots,x_n\}$ in $M$ such that $\{x_1,\ldots,x_p\}''$ is a nonamenable subalgebra of $M$. By \cite{Connes} there is $\ee>0$ such that $\|x_i'-x_i\|_2 < \ee$ for all $i$ implies $\{x_1',\ldots,x_p'\}''$ is also nonamenable. We claim that there exist self-adjoint trace 0 unitaries $u_{m,n}\in M$ (where $m\le n$) with the following properties:

    \begin{enumerate}
        \item For all $\ell,m\le n,$ $[u_{\ell,n},u_{m,n}] = 0$;
        \item For all $\ell\neq m\le n$, $\tau(u_{\ell,n}u_{m,n})=0$;
        \item For all $m\le n,$ $\|u_{m,n} - u_{m,n+1}\|_2 < 2^{-n}$;
        \item For all $i\leq p$ and $m\leq n$, we have $\|[x_i,u_{m,n}]\|_2 < \ee(2^{-m} - 2^{-(n+1)})$.
    \end{enumerate}

    To prove the claim we proceed by induction on $n$. The case $n=1$ is clear since $M$ has property Gamma (here, we only need to check condition (4), and we use the fact that we can lift trace 0 self-adjoint unitaries to trace 0 self-adjoint unitaries). So assume unitaries $u_{k,\ell}$ satisfying (1)-(4) exist for all $k\leq\ell\leq n.$ Since $M'\cap M^\cU$ is diffuse, we can find a self-adjoint trace 0 unitary $v\in M'\cap M^\cU$ which is orthogonal to all of the $u_{m,n}$ with $1\leq m\leq n.$ Now the $u_{m,n}$ and $v$ are in the same abelian subalgebra of $M^\cU$ so that we can lift them to sequences of commuting, orthogonal, trace 0 self-adjoint unitaries: $u_{m,n} = (a_{m,k})_k$ and $v = (b_k)_k$ (see Lemma 2.2 of \cite{gao2024internal}). Choose $K$ sufficiently large so that \begin{itemize}
        \item For all $j\leq p$ and $m\leq n$, $\|[x_j,a_{m,K}]\|_2 < \ee(2^{-m} - 2^{-(n+2)})$;
        \item For all $j\leq p$, $\|[x_j,b_{K}]\|_2 < \ee(2^{-(n+1)} - 2^{-(n+2)})$;
        \item For all $m\leq n,$ $\|u_{m,n} - a_{m,K}\|_2 < 2^{-(n+1)}$.
    \end{itemize}
    This is possible due to the definition of ultrapowers and since we only have to verify finitely many approximate conditions. Now set $u_{m,n+1} = a_{m,K}$ for $m\leq n$ and set $u_{n+1,n+1} = b_K$. It is immediate these satisfy (1)-(4) along with the previously created $u_{k,\ell}$. This proves the claim.

    Now take, for each $m\in\bN,$ $u_m = \lim_n u_{m,n}$. This limit exists in 2-norm because we constructed Cauchy sequences. Furthermore, we have that each $u_m$ is a trace 0 self-adjoint unitary in $M$, all of the $u_m$ commute, and all of the $u_m$ are orthogonal. Now, denote $A_m = \{u_m\}''$ and $B_m = \{u_1,\ldots,u_m\}''$. Since $\|x_j-u_mx_ju_m\|_2 \leq \ee2^{-m}$ for all $1\leq j\leq p$ and $m\geq1,$ we see that $\|\frac{1}{2}(x_j+u_mx_ju_m) - x_j\|_2 \leq \ee2^{-(m+1)}$. Therefore $$\|E_{A_m'\cap M}(x_j) - x_j\|_2 \leq \ee 2^{-(m+1)}$$ for all $m\geq 1$ and $1\leq j\leq p.$    We note that $E_{B_m'\cap M}\circ E_{A_{m+1}'\cap M} = E_{B_{m+1}'\cap M}$ for all $m\geq 1$ and that furthermore, denoting $B = \overline{\cup_mB_m}$, for each $y\in M$, $\|E_{B'\cap M}(y) - E_{B_m'\cap M}(y)\|_2 \to 0$ (see Lemma 1.2 of \cite{Pop81kadison}). Therefore we get that, for each $1\leq j\leq p,$ 
    \begin{align*}
        \|E_{B_{m+1}'\cap M}(x_j) - E_{B_m'\cap M}(x_j)\|_2 &= \|E_{B_m'\cap M}(E_{A_{m+1}'\cap M}(x_j)) - E_{B_m'\cap M}(x_j)\|_2  \\
        &\leq \|E_{A_m'\cap M}(x_j) - x_j\|_2 \\
        &\leq \ee 2^{-(m+2)}.
    \end{align*} 
    A repeated use of the triangle inequality shows that $\|E_{B'\cap M}(x_j) - x_j\|_2 \leq \ee$ for each $1\leq j\leq p$. For $j\leq p,$ set $x_j' = E_{B'\cap M}(x_i)$. It follows that $\|x_j'-x_j\|_2 \leq \ee$ for all $j\leq p.$ Therefore $\{x_1,\ldots,x_p\}''$ is a nonamenable subalgebra of $M$ with diffuse commutant.
\end{proof}

\section{Simultaneous relative asymptotic orthogonality}

Recall the following definition of relative asymptotic orthogonality property from \cite{HOUDAYER2014414}, inspired by \cite{Popa1983MaximalIS}, see also \cite{Bleary}.

\begin{defn}\label{aopsimult}
    For von Neumann algebras $P\subset N\subset M$, we say that the inclusion $N\subset M$ has the asymptotic orthogonality property relative to $P$ if for any non principal ultrafilter $\mathcal{U}$ on $\mathbb{N}$, for any $y_1,y_2 \in M \ominus N$, and for any $x\in P' \cap(M^{\mathcal{U}}\ominus N^\cU)$, we have that $xy_1 \perp y_2x$ in $L^2(M^{\mathcal{U}})$. 
\end{defn}

For the purpose of our results we need to investigate the following setting of simultaneous relative asymptotic orthogonality for a family of subalgebras:

\begin{defn}
    Let $(M,\tau)$ be a tracial von Neumann algebra and let $\cU$ be a free ultrafilter on $\mathbb N.$ Let $A\subset N_i \subset M$ be von Neumann subalgebras where $i$ ranges over some index set $I$. We say that $\{N_i\}_{i\in I}$ has the asymptotic orthogonality property in $M$ relative to $A$ if for all $x\in A' \cap M^\cU\ominus (\overline{\Span}\{N_i^\cU : i\in I\})$ and $y_1,y_2\in M\ominus (\overline{\Span}\{N_i : i\in I\})$ we have $xy_1 \perp y_2x$ in $L^2(M^\cU)$.
\end{defn}

The main result of this section is the following:
\begin{thm}\label{thm: asymptotic orthogonality}
    Let $N_i \simeq L(\mathbb Z \wr \mathbb Z)$ for $i\in I,$ where $I$ is some index set. Form the amalgamated free product $M = *_A N_i$ where $A\subset N_i$ is the von Neumann algebra $L(\mathbb Z)$ of the acting group. Then $\{N_i\}_{i\in I}$ has the asymptotic orthogonality property in $M$ relative to $A$.
\end{thm}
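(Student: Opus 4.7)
The plan is to adapt the relative asymptotic orthogonality template of Popa \cite{Popa1983MaximalIS} and Houdayer \cite{HOUDAYER2014414} to the simultaneous amalgamated setting, using the uniform-flattening machinery of \cite{patchellelayavalli2023sequential} to control orthogonality against every $N_i^{\cU}$ at once. First I would reduce, by linearity and $\|\cdot\|_2$-density, to the case where $y_1 = t_1\cdots t_k$ and $y_2 = s_1\cdots s_\ell$ are reduced words in the amalgamated free product, of lengths $k,\ell \ge 2$, with letters alternating among the $N_{i_m}\ominus A$. Since $\langle xy_1, y_2 x\rangle = \tau(x^* y_2^* x y_1)$, the goal becomes showing this scalar vanishes.

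Next I would choose a bounded representative $x = (x_n)_n$ and apply the uniform-flattening construction of \cite{patchellelayavalli2023sequential}, which precisely identifies lifts of commutant elements via Kaplansky density. After a $\|\cdot\|_2$-small perturbation within $A'\cap M^{\cU}$ that preserves orthogonality to every $N_i^{\cU}$, I arrange $(x_n)$ so that each $x_n$ has a controlled word-profile with respect to the amalgamated free-product decomposition: the mass of $x_n$ on any fixed $N_i$ is uniformly small in $n$, and the near-commutation $\|[x_n,u]\|_2\to 0$ with a fixed Haar generator $u\in A$ is quantitative. This is the technical heart of the argument; passing to a subsequence or applying a single Kaplansky density step is not enough, because simultaneous orthogonality to every $N_i^{\cU}$ must survive the flattening, and this is exactly what the identification of commutant lifts from \cite{patchellelayavalli2023sequential} furnishes.

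Finally I would invoke mixing. The acting $\bZ$ is malnormal in $\bZ\wr\bZ$, so $A\subset N_i$ is mixing for each $i$, and Lemma \ref{lem: amalgam-mixing} upgrades this to mixing of $A\subset M$. Using $xv = vx$ for every $v\in\mathcal U(A)$, rewrite
\[
\tau(x^* y_2^* x y_1) = \tau\bigl(x^*(v y_2^* v^*) x (v y_1 v^*)\bigr)
\]
and take $v$ to be high powers of the Haar generator of $A$. By mixing, the conjugates $v y_j v^*$ drift into subspaces of $L^2(M)$ asymptotically orthogonal to any fixed finite-rank piece, while the uniform flattening keeps the letter-profile of $x_n$ off each $N_i$ uniformly in $n$. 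Expanding the trace through the amalgamated free-product bimodule decomposition and passing to the ultrafilter limit then forces the scalar to vanish.

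The main obstacle is the flattening step in the second paragraph: once a representative with these simultaneous orthogonality and approximate-commutation controls is in hand, the averaging calculation of the third paragraph follows the template of \cite{Popa1983MaximalIS,HOUDAYER2014414}. The new difficulty over Houdayer's relative AOP, where one tracks orthogonality to a single subalgebra, is that here one must preserve orthogonality to every $N_i^{\cU}$ at the same time while performing the perturbation, which is why the machinery of \cite{patchellelayavalli2023sequential} and the concrete structure of the lamplighter factors are both essential inputs.
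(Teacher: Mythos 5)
Your broad skeleton is right: reduce to $y_1,y_2$ being reduced words, lift $x$ to an $a$-almost-commuting sequence, and use the uniform-flattening estimates from \cite{patchellelayavalli2023sequential} to control the lifts. But your third step is not what carries the proof, and as stated it has a real gap.

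The paper's argument is \emph{not} an averaging-under-$\operatorname{Ad}(a^t)$ argument. You propose to exploit $\tau(x^*y_2^*xy_1)=\tau\bigl(x^*(vy_2^*v^*)x(vy_1v^*)\bigr)$ for $v=a^t$ and then invoke mixing to make the conjugates drift to zero. Two problems. First, mixing of $A\subset M$ controls quantities of the form $\|E_N(za^tw)\|_2$ for $z,w\in M\ominus N$, which is not the same as saying that $a^t y_j a^{-t}$ ``drifts orthogonally'' against the fixed noncommutative polynomial $x^*(\cdot)x(\cdot)$ inside the ultrapower; you would still have to expand the product $x^*(a^ty_2^*a^{-t})x(a^ty_1a^{-t})$ in the free-product decomposition, at which point you rediscover exactly the combinatorial difficulty the paper confronts. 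Second, and decisively, the authors explicitly cannot run the argument for general mixing masas in $R$; the mixing hypothesis is used only in the soft absorption step (Theorem~\ref{thm: generalize cyril}), not in the AOP theorem itself. If a pure mixing-plus-averaging argument worked here, one would get the more general statement, which the paper says it does not know how to prove.

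What actually closes the proof is a coordinate/block combinatorial orthogonality that is specific to the lamplighter generators. Writing $G=\bZ\wr\bZ*_\bZ\bZ\wr\bZ$ with acting generator $a$ and spatial generators $b_m,c_m$, each word factors as $B_1C_1\cdots B_nC_na^p$ where each block $B_i$ or $C_i$ carries a finite set of integer \emph{coordinates}. The uniform-flattening lemma (Lemma~6.8 of \cite{patchellelayavalli2023sequential}) is used twice in a very concrete way: once to show that the Fourier mass of the lift $x_k$ on conjugation exponents $n\in[-R,R]$ is small (Claim~A), and once to kill the mass on a set $X_w$ of bad exponents for each word $w$ (Claim~B). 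These claims let one replace $x_k$ by an approximant $x_k'$ whose supporting words have a first block with a coordinate \emph{outside} a large interval $I$ and at least two blocks. The orthogonality $y_1 x_k'\perp x_k' y_2$ then follows because the second block of any word in the support of $y_1 x_k'$ necessarily has a coordinate in a window $J\subset I$, whereas the second block of any word in the support of $x_k' y_2$ either fails to exist, fails to have first-block coordinates in $[-N,N]$, or has all its coordinates outside $J$. This is a purely deterministic statement about where coordinates can land after left- versus right-multiplication by the bounded-coordinate words $y_1,y_2$; it is not an asymptotic mixing estimate. Your outline never produces this disjointness of coordinate profiles, and without it the final trace identity does not collapse. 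So the conceptual gap is precisely the step you labelled as the ``averaging calculation'' that ``follows the template'' — here it does not, and the concrete lamplighter combinatorics has to replace it.
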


\begin{proof} 



    We prove the case $|I|=2$ with the other cases being similar. Let $G = \bZ\wr\bZ *_\bZ \bZ\wr \bZ$ where the amalgam is the acting group of both wreath products. Denote by $K\simeq \bZ$ the acting group and $H_1,H_2$ the two wreath products containing $K$. Denote by $a$ the generator of $K$ and $b_n,c_n$ the commuting generators of the spatial components of $H_1,H_2$ respectively. An element in $G$ can be uniquely represented by a word of the form $B_1C_1\cdots B_nC_na^p$ where $B_i \in H_1$ is a product of nonzero powers of the $b_m$ where the $b_m$ are listed in increasing order of $m$ and similarly for each $C_i$. We call each $B_i$ and $C_i$ a \emph{block}. The \emph{coordinates} of a block $B$ (resp. $C$) are the integers $m$ such that some nonzero power of $b_m$ (resp. $c_m$) appears as a factor of $B$ (resp. $C$). We also note that we have $A = L(K)$, $N_i = L(H_i),$ and $M = L(G),$ and we identify the group elements in $G$ with the corresponding group unitaries in $M$.

    Let $y_1,y_2 \in G\setminus (H_1\cup H_2)$. We assume that all coordinates in all blocks of $y_1$ and $y_2$, as well as the powers of $a$ at the end of the words $y_1$ and $y_2$, are in the interval $[-N,N]$. Let $x \in A' \cap M^\cU \ominus (N_1^\cU + N_2^\cU)$ and assume $\|x\|_2=1$. Our present aim is to show that $y_1x\perp xy_2$. To that end, we will find an approximation of a lift $(x_k)_k$ of $x$ such that $y_1x_k \perp x_ky_2$ for all $k.$

    Before diving into the technicalities, we outline our strategy. First, we use the \emph{uniform-flattening} strategy from \cite{patchellelayavalli2023sequential} to say that $x$ lifts to elements whose Fourier coefficients are extremely close to being invariant under conjugation by $a$. This forces two things. First, the first blocks in words in the support of $x$ nearly always have some coordinate outside of a fixed very large interval $I$ ($I$ depending on $N$ and $\ee$). Second, the first blocks in words in the support of $x$ cannot cancel with $y_1$ without leaving behind nonzero coordinates in a large interval $J\subset I$. This means that the words in the support of the product $y_1x$ start with blocks whose coordinates are in $[-N,N]$, their second blocks contain some coordinate in $J,$ and have at least 3 blocks. On the other hand, consider the situation for $xy_2$. A word in its support can only start with a block with coordinates in $[-N,N]$ if all but one of the blocks coming from $x$ get fully cancelled by a shift of $y_2$. Since $x$ has a starting block with a very large coordinate (outside $I$), this means that the shift is very large. Hence the second block either doesn't exist or comes from a very large shift of $y_2,$ meaning its coordinates will lie solely outside of $J$. This forces $y_1x$ and $xy_2$ to be orthogonal. Taking sums and limits proves the result for $y_1,y_2$ not words.

Fix $\ee>0.$ Let $\cC\subset G\setminus K$ be a set such that every element of $G\setminus K$ can be obtained via conjugating by some power of $a$ but no two elements in $\cC$ are conjugate via a power of $a$. Furthermore, we may assume that whenever $g\in G\setminus K$ is a word such that its first block contains coordinates only in $[-N,N],$ then $g = a^nwa^{-n}$ for some $w\in \cC$ and some $n\in[-N,N]$. Let $J = [-S,S]$ and $I = [-R,R]$ where $R,S\in\bN$ are such that $S > N + (2N+1)/\ee^2$ (so that $(S-N)/(2N+1) > 1/\ee^2$) and $R = S+2N$. For each element $g\in G\setminus K,$ define $c(g)$ to be the set of coordinates of the first block of $g$. Define 
$$X_g = \{n\in\bZ : \emptyset \neq c(a^nga^{-n})\cap[-S,S] \subset [-N,N]\}.$$
Note that if $n\in X_g$ and $k\in [2N+1, S-N]$ then $x+k \not\in X_g$. Furthermore, if $n_1,n_2\in X_g$ then either $|n_1-n_2| \leq 2N$ or $|n_1-n_2| > S-N.$ 

Suppose $z\in M\ominus (N_1 + N_2)$ and $\|z\|_2\le1$. Then we may write the Fourier decomposition of $z$ a follows:
$$z = \sum_{w\in\cC}\sum_{n\in\bZ}\alpha_{w,n}a^nwa^{-n},$$
where $\alpha_{w,n}=0$ whenever $a^nwa^{-n} \in H_1\cup H_2.$

\textbf{Claim A:} There is $\delta>0$ sufficiently small such that $\|[z,a]\|_2 < \delta$ implies that $$\sum_{w\in\cC}\sum_{n=-R}^R|\alpha_{w,n}|^2 < \ee^2.$$

\textit{Proof of Claim A:} Set $\delta = \ee^2/(48R^3)$. Following the \emph{uniform-flattening} strategy from \cite{patchellelayavalli2023sequential}, for each $n\in\bZ$ write $v_n = (\alpha_{w,n})_{w\in\cC} \in \ell^2(\cC)$. Observe that
    $$\|[z,a]\|_2^2 = \sum_{n\in\bZ}\|v_n - v_{n+1}\|^2<\delta^2.$$
    By Lemma 6.8 of \cite{patchellelayavalli2023sequential}, we have that $\|v_0\|^2 < 9\delta$. Since $\|v_n - v_{n+1}\| < \delta$ for all $n,$ we observe that $\|v_n\| \leq 3\sqrt{\delta} + |n|\delta$ for each $n\in\bZ$. Therefore $\sum_{n=-R}^R\|v_n\|^2 \leq (2R+1)(3\sqrt{\delta} + |n|\delta)^2 < 48R^3\delta = \ee^2$. Noting $\sum_{w\in\cC}\sum_{n=-R}^R|\alpha_{w,n}|^2 = \sum_{n=-R}^R\|v_n\|^2$ concludes the proof of the claim. 

\textbf{Claim B:} There is $\delta>0$ sufficiently small such that $\|[z,a]\|_2 < \delta$ implies that $$\sum_{w\in\cC}\sum_{n\in X_w}|\alpha_{w,n}|^2 \leq \ee^2.$$

\textit{Proof of Claim B:} Suppose towards a contradiction that $\sum_{w\in\cC}\sum_{n\in X_w}|\alpha_{w,n}|^2 > \ee^2$. Recall that for each $w\in\cC,$ we had that $n\in X_w$ and $k\in [2N+1,S-N]$ implies $n+k\not\in X_w,$ and moreover $n_1,n_2\in X_w$ implies that either $|n_1-n_2| \leq 2N$ or $|n_1-n_2| > S-N.$ Therefore for a fixed word $w\in\cC,$ the integers $n + k(2N+1)$ are all distinct, where $n\in X_w$ and $0\leq k < (S-N)/(2N+1).$ Since $(S-N)/(2N+1) > 1/\ee^2,$ we may simply sum over $k$ from 0 to (the floor of) $1/\ee^2.$ Thus
$$\sum_{w\in\cC}\sum_{n\in X_w}\sum_{k=0}^{1/\ee^2}|\alpha_{w,n+k(2N+1)}|^2 \leq \|z\|_2^2 \le 1. $$
For each $0\leq k < 1/\ee^2,$ define vectors $v_k = (\alpha_{w,n+k(2N+1)})_{w\in\cC,n\in X_w}$ which are each square-summable sequences such that $\sum_{k=0}^{1/\ee^2}\|v_k\|^2 \leq 1.$ Our assumption is that $\|v_0\|^2 > \ee^2$. As in the proof of Lemma 6.8 in \cite{patchellelayavalli2023sequential}, if $\|[z,a]\|_2 < \delta$ then by the Cauchy-Schwarz inequality $\|v_k\| \geq \|v_0\| - \delta\sqrt{k(2N+1)}$. Therefore, for $\delta$ sufficiently small depending only on $\ee$ and $N$, we have
\begin{align*}
    \sum_{k=0}^{1/\ee^2}\|v_k\|^2 &\geq \sum_{k=0}^{1/\ee^2}(\|v_0\| - \delta\sqrt{k(2N+1)})^2\\
    &> 1/\ee^2\cdot \ee^2 = 1,
\end{align*}
a contradiction. This proves Claim B.

Now fix a lift $x = (x_k)_k.$ Since $[x,a] = 0,$ we have that $\|[x_k,a]\|_2 \to 0.$ By the Claims, there is $T\in\cU$ so that for all $k\in T$, we can approximate $x_k$ by $x_k'$ such that $\|x_k-x_k'\|_2 \le 2\ee$ and 
$$x_k' = \sum_{w\in\cC}\sum_{n\in\bZ}\beta_{w,n}a^nwa^{-n},$$
where $\beta_{w,n} = 0$ whenever $a^nwa^{-n} \in H_1\cup H_2,$ whenever $n\in[-R,R]$, and whenever $n\in X_w.$ Therefore $x_k'$ is supported on words whose first block has some coordinate outside of $I$; $x_k'$ is also only supported on words with at least two blocks. Hence all of the words in the support of $y_1x_k'$ have at least three blocks and their first blocks have coordinates only in $[-N,N]$ (the last block of $y_1$ and the first block of each word in the support of $x_k'$ cannot cancel fully). Furthermore, consider the second block of $y_1x_k'.$ If $y_1$ has at least 3 blocks,  then the second block of $y_1x_k'$ is just the second block of $y_1$ and so it has coordinates in $[-N,N]\subset [-S,S].$ However, it is also possible for $y_1$ to have exactly 2 blocks. In this case, note that a word $g$ is in the support of $x_k'$ only if $c(g) \cap [-S,S] = \emptyset$ or there is some $n \in c(g)\cap [-S,S]\setminus [-N,N]$. In the first case, the second block of $y_1g$ retains its coordinates from $y_1,$ so it has some coordinate in $[-N,N]\subset[-S,S]$. In the second case, the second block of $y_1g$ retains its coordinate that lies in $[-S,S]\setminus [-N,N]$, so it has some coordinate in $[-S,S]$.

On the other hand, consider $x_k'y_2.$ Write $x_k' = \sum_i \gamma_i w_i a^{p_i}$ where each $\gamma_i$ is a scalar and each $w_i \in \ip{b_n,c_n : n\in\bZ}$. Consider only those words $w_i$ such that $\gamma_i$ is nonzero and $w_ia^{p_i}y_2$ is a word whose first block has only coordinates in $[-N,N]$. Since the first block of each $w_i$ has some coordinate outside of $[-R,R]$ and the blocks of $y_2$ only have coordinates inside $[-N,N]$, we must have $p_i > R-N$ for all $i.$ Since $R = S+2N,$ we have that $p_i > S+N$ for all $i.$ For these $w_i,$ we note that all blocks except the first get completely cancelled by blocks of $a^{p_i}y_2.$ Therefore the second block of $w_ia^{p_i}y_2,$ if it exists, is actually a block of $a^{p_i}y_2.$ But since every block of $y_2$ has coordinates in $[-N,N]$, every block of $a^{p_i}$ has coordinates outside $[-S,S]$. In summary, every word in the support of $x_k'y_2$ either (i) has a first block with some coordinate outside $[-N,N]$, (ii) has 1 or fewer blocks, or (iii) has a second block whose coordinates are entirely outside of $[-S,S]$. 

This means that $y_1x_k' \perp x_k'y_2$. Writing $x' = (x_k')_k\in L^2(M)^\cU,$ we have that $y_1x' \perp x'y_2$ and $\|x'-x\|_2 < 2\ee$. Since $\ee>0$ was arbitrary, we get that $y_1x\perp xy_2.$ It is then clear by taking sums and 2-norm limits (and using the fact that $x$ is operator norm bounded) that $z_1x\perp xz_2$ for all $z_1,z_2\in M\ominus (N_1+N_2)$.
    
\end{proof}

\section{Proof of main result}




The following lemma is probably well known to experts, but we include it here for completeness.
\begin{lem}\label{lem: amplification}
    Let $(M,\tau)$ be a tracial von Neumann algebra and $A,(B_n)_{n\ge1}$ be von Neumann subalgebras of $M$. Let $\Tilde{M} = \oplus_n M_n$ where $M_n\simeq M$ for all $n$, and $\Tilde{B} = \oplus_n B_n.$ Let $\Tilde{A}\subset\Tilde{M}$ be defined by the constant sequence $\Tilde{A} = \{(a)_n: a\in A\}$. We clearly have natural inclusions $\Tilde{A},\Tilde{B}\subset\Tilde{M}$. Let $(t_n)_n$ be a sequence of numbers in $(0,1]$ such that $\sum_n t_n=1.$ Give $\Tilde{M}$ a trace via $\Tilde{\tau}((x_n)_n) = \sum_n\tau(x_n)$. The following are equivalent:
    \begin{enumerate}
        \item $\Tilde{A} \prec_{\Tilde{M}} \Tilde{B}$;
        \item There exists $N\in\bN$ such that $A\prec_M B_N$.
    \end{enumerate}
\end{lem}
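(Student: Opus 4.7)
Both directions are established by invoking characterization (d) of Theorem~\ref{thm-popa-fundamental} and transferring the intertwining data either into or out of the $N$-th coordinate. The key observation is that since $\Tilde{A}$ consists of constant sequences, every projection in $\Tilde A$ is constant, so the $A$-side data is forced to match across all coordinates, while the $B$-side data can vary freely across coordinates.

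\textbf{Direction (2) $\Rightarrow$ (1).} Given $A\prec_M B_N$, by (d) of Theorem~\ref{thm-popa-fundamental} there exist nonzero projections $p\in A$ and $q\in B_N$, a nonzero partial isometry $v\in pMq$, and a unital normal $*$-homomorphism $\theta\colon pAp\to qB_Nq$ with $xv=v\theta(x)$ for all $x\in pAp$. Define $\Tilde p = (p)_n \in \Tilde A$ (the constant sequence), and let $\Tilde q\in\Tilde B$ and $\Tilde v\in\Tilde M$ be supported only at coordinate $N$, with values $q$ and $v$ respectively. Finally set $\Tilde\theta\colon\Tilde p\Tilde A\Tilde p\to\Tilde q\Tilde B\Tilde q$ by $\Tilde\theta((pap)_n) = (y_n)_n$ with $y_N = \theta(pap)$ and $y_n = 0$ otherwise. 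A routine coordinate-wise check shows $\Tilde p \Tilde v \Tilde q = \Tilde v$, $\Tilde\theta(\Tilde p)=\Tilde q$, and $\Tilde x\Tilde v=\Tilde v\Tilde\theta(\Tilde x)$ for all $\Tilde x\in\Tilde p\Tilde A\Tilde p$, so (d) applies in $\Tilde M$ and $\Tilde A\prec_{\Tilde M}\Tilde B$.

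\textbf{Direction (1) $\Rightarrow$ (2).} Assume $\Tilde A\prec_{\Tilde M}\Tilde B$, and apply (d) of Theorem~\ref{thm-popa-fundamental} to obtain nonzero projections $\Tilde p\in\Tilde A$, $\Tilde q\in\Tilde B$, a nonzero partial isometry $\Tilde v\in \Tilde p\Tilde M\Tilde q$, and a unital normal $*$-homomorphism $\Tilde\theta\colon\Tilde p\Tilde A\Tilde p\to\Tilde q\Tilde B\Tilde q$ satisfying $\Tilde x\Tilde v=\Tilde v\Tilde\theta(\Tilde x)$. Since $\Tilde A$ is the constant-sequence copy of $A$, necessarily $\Tilde p=(p)_n$ for some projection $p\in A$. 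Write $\Tilde q=(q_n)_n$ and $\Tilde v=(v_n)_n$ with $q_n\in B_n$ and $v_n\in pMq_n$. Because $\Tilde q\Tilde B\Tilde q = \bigoplus_n q_n B_n q_n$ is a direct sum, $\Tilde\theta$ splits into coordinate-wise components $\theta_n\colon pAp\to q_nB_nq_n$, and the unitality $\Tilde\theta(\Tilde p)=\Tilde q$ gives $\theta_n(p)=q_n$ for every $n$. The intertwining identity reads $xv_n = v_n\theta_n(x)$ for all $x\in pAp$ and all $n$. Choose $N$ so that $v_N\neq 0$ (possible since $\Tilde v\neq 0$); then $q_N\neq 0$ and $p,q_N,v_N,\theta_N$ witness condition (d) of Theorem~\ref{thm-popa-fundamental} for $A\prec_M B_N$.

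\textbf{Obstacle assessment.} There is no substantive obstacle: the argument is pure bookkeeping with characterization (d) of the intertwining theorem. The only point requiring a moment's attention is that projections in $\Tilde A$ are forced to be constant sequences, which is exactly what allows one to find a single ``$A$-side'' projection $p$ that works at the $N$-th coordinate in the (1)$\Rightarrow$(2) direction. The weights $t_n$ play no role beyond ensuring $\Tilde\tau$ is a finite faithful trace on $\Tilde M$.
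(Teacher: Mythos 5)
Your proof is correct, but it takes a genuinely different route from the paper. You use characterization (d) of Theorem~\ref{thm-popa-fundamental} (the partial-isometry-plus-homomorphism picture) uniformly for both directions, whereas the paper uses characterization (a) (nets of unitaries and conditional expectations) for (2)$\Rightarrow$(1) and characterization (b) (finite-dimensional subbimodules) for (1)$\Rightarrow$(2). Concretely, for (2)$\Rightarrow$(1) the paper takes an arbitrary sequence $(\tilde{u}_k)$ of unitaries in $\tilde{A}$, notes each is a constant sequence $(u_k)_n$, picks $x,y\in M$ witnessing $A\prec_M B_N$, and computes $\|E_{\tilde{B}}((x)_n\tilde{u}_k(y)_n)\|_2^2 = \sum_n t_n\|E_{B_n}(xu_ky)\|_2^2 \ge t_N\|E_{B_N}(xu_ky)\|_2^2\not\to 0$; this is where the weights $t_n$ actually appear, contrary to your remark that they ``play no role.'' For (1)$\Rightarrow$(2) the paper takes a nonzero $\tilde{A}$-$\tilde{B}$ subbimodule $\cH\subset L^2(\tilde{M})$ of finite right $\tilde{B}$-dimension, picks $\xi=(\xi_n)_n\in\cH$ with $\xi_N\neq 0$, and observes that the coordinate slice $(0,\ldots,0,\xi_N,0,\ldots)$ lies in $\cH$ because the unit of $M_N$ is central in $\tilde{M}$ and lies in $\tilde{B}$; then $\overline{\Span}\,A\xi_N B_N$ has finite right $B_N$-dimension, giving $A\prec_M B_N$. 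Your key observation --- that any projection in $\tilde{A}$ must be a constant sequence --- plays the structural role in your argument that the coordinate central projections play in the paper's; both exploit that the $A$-side data is rigid across coordinates while the $B$-side data can be localized to one coordinate. Your approach is more symmetric and explicit; the paper's is somewhat shorter, and its (1)$\Rightarrow$(2) step sidesteps the need to verify that the coordinate-wise compressions $\theta_n$ of a normal unital $\ast$-homomorphism on a direct sum are again normal unital $\ast$-homomorphisms. Both are valid.
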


\begin{proof}
    (2) $\implies$ (1): Let $(\Tilde{u}_k)$ be a  sequence of unitaries in $\Tilde{A}$. Then each $\Tilde{u}_k$ is equal to a constant sequence $(u_k)_n$ for some $u_k\in Q.$ Since $A\prec_M B_N,$ there are $x,y\in M$ such that $\|E_{B_N}(xu_ky)\|_2 \not\to 0$ as $k\to\infty.$ Define $\Tilde{x} = (x)_n$ and $\Tilde{y} = (y)_n$ in $\Tilde{M}$. Then \begin{align*}
        \|E_{\Tilde{B}}(\Tilde{x}\Tilde{u_k}\Tilde{y})\|_2^2 &= \sum_n t_n\|E_{B_n}(xu_ky)\|_2^2 \\
        &\ge t_N\|E_{B_N}(xu_ky)\|_2^2 \not\to 0
    \end{align*}
    as $k\to\infty$, demonstrating that $\Tilde{A}\prec_{\Tilde{M}}\Tilde{B}$.

    (1) $\implies$ (2): By hypothesis, there is a non-zero $\Tilde{A}$-$\Tilde{B}$ subbimodule $\cH$ of $L^2(\Tilde{M})$ such that $\dim(\cH_{\Tilde{B}})<\infty$. Take $\xi\in\cH$ to be any nonzero vector; write $\xi = (\xi_n)_n$. Then for some $N$, we must have $\xi_N \neq 0.$ Since the unit of $M_N$ is in $\Tilde{B}$, we must have that the vector $\hat{\xi} = (0,\ldots,0,\xi_N,0,0,\ldots) \in \cH.$ But note that $\overline{\Span}\Tilde{A}\hat{\xi}\Tilde{B} = (0,\ldots,0,\overline{\Span}A\xi_N B_N,0,0,\ldots)$. This space is contained in $\cH$ and so has finite right $\Tilde{B}$ dimension. Clearly this forces $\overline{\Span}A\xi_N B_N$ to have finite right $B_N$ dimension, showing that $A\prec_M B_N.$
\end{proof}

\begin{lem}\label{lem: intertwining}
    Let $P,(N_k)_{k\ge1}$ be von Neumann subalgebras of a tracial von Neumann algebra $(M,\tau)$. Let $\cU$ be a free ultrafilter on $\mathbb N$ and assume that $P$ is amenable. If $P\not\prec_M N_i$ for all $k$ then there exists a unitary in $P'\cap P^\cU$ such that $u\in M^\cU \ominus (\sum_k N_k^\cU)$. 
\end{lem}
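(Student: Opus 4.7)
The plan is to reduce to the case of a single subalgebra via the amplification in Lemma \ref{lem: amplification} and then invoke a standard amenable intertwining result.

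First I would form $\tilde M=\bigoplus_n M_n$ with $M_n\simeq M$ and $\tilde B=\bigoplus_n N_n$, equipped with the trace $\tilde\tau((x_n))=\sum_n t_n\tau(x_n)$ for some positive weights $t_n$ summing to $1$, and embed $P$ diagonally as $\tilde P=\{(a)_n:a\in P\}\subset\tilde M$. The hypothesis that $P\not\prec_M N_k$ for every $k$ combined with Lemma \ref{lem: amplification} gives $\tilde P\not\prec_{\tilde M}\tilde B$.

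Next I would invoke the following well-known consequence of amenability: if $Q\subset N$ is an amenable subalgebra of a tracial $N$ and $Q\not\prec_N R$, then there exists a unitary $v\in Q'\cap Q^{\cU}$ with $E_{R^{\cU}}(v)=0$. (This is contained in the appendix of \cite{PopaStrongRigidity}; morally, non-intertwining yields unitaries in $Q$ whose $E_R$-projections have small 2-norm against a fixed finite set, and amenability of $Q$ lets one average them via Day's trick to be asymptotically central while preserving the smallness of the expectations.) Applying this to $\tilde P\simeq P$ inside $\tilde M$ with subalgebra $\tilde B$ yields a unitary $\tilde u\in\tilde P'\cap\tilde P^{\cU}$ with $E_{\tilde B^{\cU}}(\tilde u)=0$.

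Finally I would pull back to $M^{\cU}$. A representative of $\tilde u$ has the form $(\tilde u_\ell)_\ell$ with each $\tilde u_\ell=(u_\ell)_n\in\tilde P$ constant in $n$, and I set $u=(u_\ell)_\ell\in P^{\cU}$. Since the canonical identification $P\to\tilde P$ is trace-preserving, unitarity of $\tilde u$ and the commutation property $\tilde u\in\tilde P'\cap\tilde P^{\cU}$ translate directly into the same properties for $u\in P^{\cU}\subset M^{\cU}$. For the orthogonality, the key identity is
\[
\|E_{\tilde B}(\tilde u_\ell)\|_{\tilde\tau,2}^2=\sum_n t_n\,\|E_{N_n}(u_\ell)\|_{\tau,2}^2,
\]
so $E_{\tilde B^{\cU}}(\tilde u)=0$ forces $\|E_{N_k}(u_\ell)\|_{\tau,2}\to 0$ along $\cU$ for each fixed $k$, i.e.\ $E_{N_k^{\cU}}(u)=0$, whence $u\in M^{\cU}\ominus\sum_k N_k^{\cU}$.

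The substantive step is the single-subalgebra intertwining result invoked in the second paragraph; that is where amenability of $P$ is genuinely consumed. The amplification reduction and the subsequent bookkeeping are routine.
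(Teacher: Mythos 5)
Your proposal is correct and follows essentially the same route as the paper: amplify to a single target $\tilde B=\bigoplus_k N_k$ via Lemma \ref{lem: amplification}, use amenability of $P$ to produce a unitary in $P'\cap P^\cU$ whose conditional expectation onto $\tilde B^\cU$ vanishes, and unpack the weighted trace to deduce $E_{N_k^\cU}(u)=0$ for each $k$. The one organizational difference is that you invoke the single-subalgebra statement (amenable $Q\not\prec_N R$ gives a unitary in $Q'\cap Q^\cU$ orthogonal to $R^\cU$) as a black box, whereas the paper proves it inline: it chooses an increasing chain of finite-dimensional $P_j\subset P$ with $\overline{\cup_j P_j}=P$, amplifies $Q_j=P_j'\cap P$ for each fixed $j$, picks a unitary $u_j\in Q_j$ with $\|E_{\tilde N}(u_j)\|_2^2<1/j$, and lets $u=(u_j)_j$; asymptotic centrality of $u$ over $P$ is then automatic from $u_j\in P_j'\cap P$. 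Your "well-known consequence" is exactly this argument repackaged, so the two proofs consume amenability in the same way, and your accounting of the weights $t_k$ in the pull-back step is the same estimate the paper uses (namely $\|E_{N_k}(u_j)\|_2^2\le\|E_{\tilde N}(u_j)\|_2^2/t_k$). The only thing I'd flag is that the citation to the appendix of \cite{PopaStrongRigidity} for the black-boxed lemma is not the most precise pointer; the result is standard in the maximal amenability literature, but since its proof is just the two-line AFD argument above, one might as well write it out as the paper does.
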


\begin{proof}
    Since $P$ is amenable, there are finite dimensional subalgebras $P_j\subset P$ such that $P_j'\cap P$ is finite index in $P$ for all $j$ and $\overline{\cup_j P_j} = P.$ Therefore $P_j'\cap P\not\prec_M N_k$ for all $j,k.$ For now, fix $j$ and set $Q = P_j'\cap P.$ Let $\Tilde{M} = \oplus_k M_k$ where $M_k\simeq M$ for all $k$, and $\Tilde{N} = \oplus_k N_k.$ Let $\Tilde{Q}\subset\Tilde{M}$ be defined by the constant sequence $\Tilde{Q} = \{(x)_n: x\in Q\}$. Let $(t_k)_k$ be a sequence of numbers in $(0,1]$ such that $\sum_k t_k=1.$ Give $\Tilde{M}$ a trace via $\Tilde{\tau}((x_k)_k) = \sum_k\tau(x_k)$. By Lemma \ref{lem: amplification}, we have that $\Tilde{Q}\not\prec_{\Tilde{M}} \Tilde{N}$. Therefore there is a unitary $u_j\in\Tilde{Q}$ such that $\|E_{\Tilde{N}}(u_j)\|_2^2 < 1/j.$ This implies that $\|E_{N_k}(u_j)\|_2^2 < 1/(jt_k)$. Now consider the unitary $u = (u_j)_j \in P^\cU$. Since $u_j\in P_j'\cap P,$ we have that $u\in P'\cap P^\cU.$ Furthermore, we have that, for all $k,$ $\|E_{N_k^\cU}(u)\|_2 = \lim_{j\to\cU}\|E_{N_k}(u_j)\|_2 = 0.$ This proves the lemma.
\end{proof}

\begin{thm}\label{thm: generalize cyril}
    Let $(M,\tau)$ be a tracial von Neumann algebra. Let $A\subset N_1,\ldots,N_k \subset M$ be amenable von Neumann subalgebras. Assume the following:
    \begin{enumerate}
        \item $N_i$ is weakly mixing through $A$ in $M$ for all $1\leq i\leq k$;
        \item $\{N_1,\ldots,N_k\}$ has the asymptotic orthogonality property in $M$ relative to $A$.
    \end{enumerate}
    Then for any amenable extension $P\supset A$, there exist projections $p_1,\ldots,p_k \in P'\cap N_1\cap\cdots\cap N_k$ such that $\sum_i p_i = 1$ and $Pp_i \subset p_iN_ip_i$ for all $1\leq i\leq k$.
\end{thm}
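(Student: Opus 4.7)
The first observation is that hypothesis (1) together with Lemma \ref{lem: wk-mixing-absorb} immediately gives $A'\cap M \subset N_j$ for every $j$, and since $A\subset P$ we obtain $P'\cap M\subset\bigcap_{j=1}^k N_j$. Hence any projection of $P'\cap M$ already lies in $P'\cap N_1\cap\cdots\cap N_k$, so it suffices to produce mutually orthogonal projections $p_1,\ldots,p_k\in P'\cap M$ summing to $1$ with $Pp_i\subset p_iN_ip_i$ for each $i$.

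The heart of the argument is the following dichotomy: for any amenable $Q$ with $A\subset Q\subset M$, there exists $i$ with $Q\prec_M N_i$. To prove this, suppose for contradiction that $Q\not\prec_M N_i$ for every $i$. Then Lemma \ref{lem: intertwining} produces a unitary $u\in Q'\cap Q^\cU$ with $u\in M^\cU\ominus\sum_j N_j^\cU$. Since $A\subset Q$, we have $u\in A'\cap M^\cU$, and hypothesis (2) yields the orthogonality $uy_1\perp y_2 u$ in $L^2(M^\cU)$ for every $y_1,y_2\in M\ominus\sum_j N_j$. Following the strategy of \cite[Theorem 5.1]{HOUDAYER2014414} (itself a refinement of \cite{Popa1983MaximalIS}), one combines this orthogonality with the weak mixing of each $A\subset N_j$ to conclude that $u$ must lie in $\sum_j N_j^\cU$, contradicting $u\perp\sum_j N_j^\cU$.

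Next comes an upgrading step. Given $Q\prec_M N_i$, apply Popa's intertwining theorem \ref{thm-popa-fundamental}(d) to obtain projections $p\in Q$, $q\in N_i$, a nonzero partial isometry $v\in pMq$, and a unital normal $*$-homomorphism $\theta:pQp\to qN_iq$ with $xv=v\theta(x)$ for $x\in pQp$. Since $A\subset Q$ and $\theta$ sends $pAp$ into $qN_iq$, after possibly cutting by a suitable projection of $A$ dominated by $p$ one obtains an intertwining of the form $Av\subset vN_i$. An application of Lemma \ref{lem: wk-mixing-absorb} then forces $v\in N_i$, and a standard central-support argument yields a nonzero projection $e_0\in Q'\cap M$ with $Qe_0\subset e_0 N_i e_0$.

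To finish, by Zorn's lemma choose a maximal family $\{(e_\alpha,i_\alpha)\}$ of pairwise orthogonal projections $e_\alpha\in P'\cap M$ with labels $i_\alpha\in\{1,\ldots,k\}$ satisfying $Pe_\alpha\subset e_\alpha N_{i_\alpha}e_\alpha$. If $q := 1 - \sum_\alpha e_\alpha \neq 0$, then $q\in P'\cap M\subset\bigcap_j N_j$ by the first paragraph, $Pq\supset Aq$ is amenable inside $qMq$, and hypotheses (1) and (2) pass to the corner (weak mixing of $Aq\subset qN_jq$ follows by multiplying witnessing unitaries by $q$, while the simultaneous AOP transfers because $qM^\cU q = (qMq)^\cU$). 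The dichotomy and upgrading applied to $Aq\subset Pq\subset qMq$ produce a nonzero subprojection $e\leq q$ with $Pe\subset eN_ie$ for some $i$, contradicting maximality. Hence $\sum_\alpha e_\alpha = 1$, and setting $p_i := \sum_{\alpha:i_\alpha=i} e_\alpha$ gives the required decomposition.

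The principal difficulty is the dichotomy step: one must concretely extract the contradiction $u\in\sum_j N_j^\cU$ from the simultaneous orthogonality and weak mixing, handling all $k$ subalgebras at once rather than the single-algebra case treated in \cite{HOUDAYER2014414}. The upgrading, while technical, follows the established template once Lemma \ref{lem: wk-mixing-absorb} is in hand, and the Zorn/exhaustion mechanism is then entirely soft.
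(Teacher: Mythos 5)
Your overall architecture matches the paper's: use weak mixing through $A$ to force $P'\cap M\subset\bigcap_j N_j$, then use the AOP plus intertwining to produce projections over which $P$ lives inside some $N_i$, and finally exhaust. The paper implements the exhaustion via maximal projections ($p_1$ maximal with $Pp_1\subset p_1N_1p_1$, then $p_2$ maximal in the complementary corner, etc.) rather than a Zorn family, but this is cosmetic. However, two of your key steps are incorrect as written.

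\textbf{The dichotomy contradiction.} You assert that one ``combines this orthogonality with the weak mixing of each $A\subset N_j$ to conclude that $u$ must lie in $\sum_j N_j^\cU$.'' Weak mixing plays no role at this step, and in fact no such intermediate conclusion is reached. The actual contradiction is immediate and purely algebraic: the unitary $u\in Q'\cap Q^\cU$ provided by Lemma~\ref{lem: intertwining} comes with a representing sequence $(u_n)$ of unitaries in $Q$ satisfying $\|E_{N_j}(u_n)\|_2\to 0$ for every $j$. Since $u\in Q'$ and $u_n\in Q$, one has $uu_n=u_nu$ exactly. On the other hand, replacing $u_n$ by its orthogonal projection $u_n'$ onto $M\ominus\overline{\Span}\{N_j\}$ (which is a vanishingly small perturbation in $\|\cdot\|_2$), the simultaneous AOP gives $uu_n'\perp u_n'u$. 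For $n$ large this forces $\|uu_n\|_2^2$ to be simultaneously equal to $\tau(u^*u)>0$ and close to $0$, a contradiction. Citing Houdayer's strategy without identifying this mechanism leaves the ``principal difficulty'' you flag at the end genuinely unresolved.

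\textbf{The upgrading step.} You apply Theorem~\ref{thm-popa-fundamental}(d), obtaining $p\in Q$, a partial isometry $v\in pMq$, and a homomorphism $\theta\colon pQp\to qN_iq$ with $xv=v\theta(x)$ for $x\in pQp$. You then propose to ``cut by a suitable projection of $A$ dominated by $p$'' to extract $Av\subset vN_i$. This does not work: $p$ is merely a projection in $Q$, and there is no reason any nonzero projection of $A$ sits under $p$; nor does $pAp$ contain a copy of $A$, so the intertwining relation gives no control on $Av$. The correct move, which the paper makes, is to invoke the matrix-amplified form (c): one gets $\theta\colon Q\to rM_n(N_i)r$ defined on \emph{all} of $Q$ and a row $v=[v_1\cdots v_n]$ with $xv=v\theta(x)$ for every $x\in Q$. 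Then for every $j$ one has $Av_j\subset\sum_l v_lN_i$, and Lemma~\ref{lem: wk-mixing-absorb} applies directly to give $v_j\in N_i$, whence $vv^*\in Q'\cap N_i$ with $Qvv^*\subset vv^*N_ivv^*$. Your (d)-based version cannot be repaired by cutting projections, so this is a genuine gap rather than an omitted routine detail.

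As a lesser remark, your cornering of the hypotheses to $qMq$ works out (the checks you sketch go through, using that $q\in N_j$ for all $j$) but is avoidable: the paper stays in the ambient $M$ and applies Lemma~\ref{lem: intertwining} directly to the non-unital subalgebra $Q=Pq$, which is notationally cleaner and dodges the need to re-verify the hypotheses under compression.
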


\begin{proof}
    We proceed as in the proof of Theorem 8.1 in \cite{HOUDAYER2014414}. For clarity, we assume without loss of generality that $k=2$. Let $P$ be an amenable extension of $A.$ Using weak mixing, $P'\cap M \subset N_i$ for $i=1,2$. Note that in particular this implies that for $P'\cap M \subset N_1\cap N_2$; we denote $N = N_1\cap N_2.$ Let $p_1$ be the maximal projection in $P'\cap N$ such that $Pp_1 \subset p_1N_1p_1.$ Let $p_2$ be the maximal projection in $(1-p_1)(P'\cap N)(1-p_1)$ such that $Pp_2 \subset p_2N_2p_2.$ Assume towards a contradiction that $p_1+p_2\neq1.$ Write $q = 1-p_1-p_2$ and set $Q = Pq$. Assume towards a contradiction that $Q\prec_M N_i$ for $i=1,2$. By Lemma \ref{lem: intertwining}, there is a unitary $u = (u_n)\in Q'\cap Q^\cU$ such that $u\in P' \cap M^\cU\ominus (N_1^\cU+N_2^\cU)$. Using simultaneous relative asymptotic orthogonality, we get that for $n$ sufficiently large, $uu_n$ is both nearly equal to and nearly orthogonal to $u_nu$, a contradiction. We therefore conclude that for some $i=1,2$, we have $Q\prec_M N_i$. Without loss of generality we assume $Q\prec_M N_2,$ with the case $i=1$ being similar. By Theorem \ref{thm-popa-fundamental}, there is $n\geq 1$, a nonzero partial isometry $v = [v_1 \cdots v_n] \in M_{1,n}(qM)$, a projection $r\in M_n(N_2)$, and a *-homomorphism $\theta:Q \to rM_n(N_2)r$ such that $v^*v \leq r$ and $xv = v\theta(x)$ for all $x\in Q.$ In particular, we see that for each $i,$ $Qv_i \subset \sum_j v_jN_2$, whence $Av_i \subset \sum_jv_jN_2$. By Lemma \ref{lem: wk-mixing-absorb}, since $N_2\subset M$ is weakly mixing through $A$, we have that $v_i\in N$ for each $i.$ Therefore we deduce that $vv^* \in Q'\cap qN_2q$ and $Qvv^*\subset vv^*N_2vv^*$. Hence $P(p_2 + vv^*)\subset (p_2+vv^*)N_2(p_2+vv^*)$. But this contradicts the maximality of $p_2$, concluding the proof.
\end{proof}

We can now prove the results stated in the introduction.

\begin{proof}[Proof of Theorem \ref{beast1}]
    By Lemma \ref{lem: amalgam-mixing}, we have that $N_i$ is weakly mixing through $A$ in $M$ for all $1\leq i\leq k$. By Theorem \ref{thm: asymptotic orthogonality} we have that $\{N_1,\ldots,N_k\}$ has the asymptotic orthogonality property in $M$ relative to $A$. Thus Theorem \ref{thm: generalize cyril} says that any amenable extension $P\supset A$ contains projections $p_1,\ldots,p_k \in P'\cap N_1\cap\cdots \cap N_k$ such that $\sum_i p_i = 1$ and $Pp_i \subset p_iN_ip_i$ for all $1\leq i\leq k$. Since $A\subset M$ is mixing, we moreover have that each $p_i$ commutes with $A$ and is therefore in $A\subset P.$ Each $p_i$ is therefore central in $P$, whence every factorial amenable extension of $A$ is contained in some $N_i.$ Evidently each $N_i$ is a distinct factorial maximal amenable extension of $A$ in $M$.
\end{proof}

\begin{proof}[Proof of Corollary \ref{coro of beast}]
    By Theorem \ref{beast1}, every amenable extension of $A$ is contained in an algebra of the form $\bigoplus_{i=1}^k p_iN_ip_i$ where $p_i\in A$ for all $i$ and $\sum_i p_i = 1.$ It is thus clear that the maximal amenable extensions are exactly of the form $\bigoplus_{i=1}^k p_iN_ip_i$ where $p_i\in A$ for all $i$ and $\sum_i p_i = 1.$ We show that two algebras $\bigoplus_{i=1}^k p_iN_ip_i$ and $\bigoplus_{i=1}^k q_iN_iq_i$ are unitarily conjugate if and only if $\tau(p_i) = \tau(q_i)$ for each $i.$ Once we know this, we see that every maximal amenable extension of $A$ is, up to unitary conjugacy, uniquely determined by the values of $\tau(p_i)$. Since these are non-negative numbers summing to 1, we can view each maximal amenable extension of $A$ as a unique convex combination of the factors $N_i$, giving us the $k$ dimensional $\bR$-simplex.

    Write $P = \bigoplus_{i=1}^k p_iN_ip_i$ and $Q = \bigoplus_{i=1}^k q_iN_iq_i$. If $\tau(p_i) = \tau(q_i)$ for each $i,$ then since each $N_i$ is a II$_1$ factor, there are partial isometries $v_i \in N_i$ such that $v_iv_i^* = p_i $ and $v_i^*v_i = q_i$. Set $u = \sum_i v_i$. It is routine to verify that $u$ is a unitary in $M$ and $u^*Pu = Q.$ Conversely, suppose that there is some unitary $u\in M$ such that $u^*Pu = Q$. Then we must have $u^*Z(P)u = Z(Q)$ and thus there is a permutation $\sigma$ such that for all $i$, $u^*p_iu = q_{\sigma(i)}$. This implies that $N_i \prec_M N_{\sigma(i)}$. We show that $\sigma$ must be the identity permutation. Assume towards a contradiction that for some $i,$ $\sigma(i)\neq i$. Recall that we may view $M = L(G)$ where $G = *_K H_\ell$ where $1\leq \ell \leq k$, $H_\ell = \bZ\wr\bZ$, and $K$ is the acting group in each $H_\ell$. Let $g$ be some element in the normal subgroup $\bigoplus_{n\in\bZ}\bZ$ of $H_i.$ By the intertwining theorem, if $N_i\prec_M N_{\sigma(i)}$ then for some $x,y\in M$, $\|E_{N_{\sigma(i)}}(xu_g^ny)\|_2 \not\to 0.$ To contradict this, we show that $\|E_{N_{\sigma(i)}}(xu_g^ny)\|_2 \to 0$ whenever $x,y$ are of the form $u_h,u_k$ for $h,k\in G$ (then we may take sums and limits). But $E_{N_{\sigma(i)}}(u_hu_g^nu_k)\neq 0$ exactly when $hg^nk \in H_{\sigma(i)}$. A routine analysis of words in amalgamated free products shows that this is only possible for finitely many values of $n$ (eventually the powers of $g$ are so large that regardless of the values of $h$ and $k,$ the reduced form of $hg^nk$ has a factor in $H_i\setminus K$). So it cannot be that $N_i\prec_M N_{\sigma(i)}$; thus, we conclude that $\sigma$ is the identity permutation. Therefore we have $u^*p_iu = q_i$ for all $i$ and in particular $\tau(p_i) = \tau(q_i)$ for all $i.$
\end{proof}

\bibliographystyle{plain}
\bibliography{probbib}

\begin{thebibliography}{10}

\bibitem{PTkilled}
Serban Belinschi and Mireille Capitaine.
\newblock Strong convergence of tensor products of independent {G.U.E.} matrices.
\newblock {\em available at arXiv:2205.07695}, 2022.

\bibitem{bordenave2023norm}
Charles Bordenave and Benoit Collins.
\newblock Norm of matrix-valued polynomials in random unitaries and permutations, 2023.

\bibitem{BC2015}
R{\'e}mi Boutonnet and Alessandro Carderi.
\newblock Maximal amenable von {N}eumann subalgebras arising from maximal amenable subgroups.
\newblock {\em Geometric and Functional Analysis}, 25(6):1688--1705, Dec 2015.

\bibitem{boutonnetcarderimaxamen}
R{\'e}mi Boutonnet and Alessandro Carderi.
\newblock Maximal amenable subalgebras of von {N}eumann algebras associated with hyperbolic groups.
\newblock {\em Mathematische Annalen}, 367(3):1199--1216, 2017.

\bibitem{BH18}
R\'{e}mi Boutonnet and Cyril Houdayer.
\newblock Amenable absorption in amalgamated free product von {N}eumann algebras.
\newblock {\em Kyoto J. Math.}, 58(3):583--593, 2018.

\bibitem{2AuthorsOneCup}
Arnaud Brothier and Chenxu Wen.
\newblock The cup subalgebra has the absorbing amenability property.
\newblock {\em Internat. J. Math.}, 27(2):1650013, 6, 2016.

\bibitem{CFRW}
Jan Cameron, Junsheng Fang, Mohan Ravichandran, and Stuart White.
\newblock The radial masa in a free group factor is maximal injective.
\newblock {\em J. Lond. Math. Soc. (2)}, 82(3):787--809, 2010.

\bibitem{chen2024newapproachstrongconvergence}
Chi-Fang Chen, Jorge Garza-Vargas, Joel~A. Tropp, and Ramon van Handel.
\newblock A new approach to strong convergence, 2024.

\bibitem{Connes}
A.~Connes.
\newblock Classification of injective factors. {C}ases \textit{II}$_1$, \textit{II}$_\infty$, \textit{III}$_\lambda$, {$\lambda \neq 1$}.
\newblock {\em Ann. of Math. (2)}, 104(1):73--115, 1976.

\bibitem{DKEP21}
Changying Ding, Srivatsav Kunnawalkam~Elayavalli, and Jesse Peterson.
\newblock Properly proximal von {N}eumann algebras, 2022.
\newblock arXiv: https://arxiv.org/abs/2204.00517.

\bibitem{ESZ2}
G\'{a}bor Elek and Endre Szab\'{o}.
\newblock Sofic representations of amenable groups.
\newblock {\em Proc. Amer. Math. Soc.}, 139(12):4285--4291, 2011.

\bibitem{gao2024internal}
David Gao, Srivatsav~Kunnawalkam Elayavalli, Gregory Patchell, and Hui Tan.
\newblock Internal sequential commutation and single generation, 2024.

\bibitem{gao2024conjugacyperturbationsubalgebras}
David Gao, Srivatsav~Kunnawalkam Elayavalli, Gregory Patchell, and Hui Tan.
\newblock On conjugacy and perturbation of subalgebras, 2024.

\bibitem{Hayes2018}
Ben Hayes.
\newblock 1-bounded entropy and regularity problems in von {N}eumann algebras.
\newblock {\em Int. Math. Res. Not. IMRN}, 2018(1):57--137, 2018.

\bibitem{HayesPT}
Ben Hayes.
\newblock A random matrix approach to the {P}eterson-{T}hom conjecture.
\newblock {\em Indiana Univ. Math. J.}, 71(3):1243--1297, 2022.

\bibitem{hayes2024generalsolidityphenomenaanticoarse}
Ben Hayes, David Jekel, Srivatsav~Kunnawalkam Elayavalli, and Brent Nelson.
\newblock General solidity phenomena and anticoarse spaces for type $\mathrm{III}_1$ factors, 2024.

\bibitem{HJKEPT}
Ben Hayes, David Jekel, and Srivatsav Kunnawalkam~Elayavalli.
\newblock Consequences of the random matrix solution to the {P}eterson-{T}hom conjecture, 2023.

\bibitem{freePinsker}
Ben Hayes, David Jekel, Brent Nelson, and Thomas Sinclair.
\newblock A random matrix approach to absorption in free products.
\newblock {\em Int. Math. Res. Not. IMRN}, 2021(3):1919--1979, 2021.

\bibitem{Cyrilsneaky}
Cyril Houdayer.
\newblock Strongly solid group factors which are not interpolated free group factors.
\newblock {\em Math. Ann.}, 346(4):969--989, 2010.

\bibitem{HOUDAYER2014414}
Cyril Houdayer.
\newblock Structure of {II$_1$} factors arising from free {B}ogoljubov actions of arbitrary groups.
\newblock {\em Advances in Mathematics}, 260:414--457, 2014.

\bibitem{Uedafree}
Cyril Houdayer and Yoshimichi Ueda.
\newblock Rigidity of free product von {N}eumann algebras.
\newblock {\em Compos. Math.}, 152(12):2461--2492, 2016.

\bibitem{CartanAFP}
Adrian Ioana.
\newblock Cartan subalgebras of amalgamated free product {${\rm II}_1$} factors.
\newblock {\em Ann. Sci. \'{E}c. Norm. Sup\'{e}r. (4)}, 48(1):71--130, 2015.
\newblock With an appendix by Ioana and Stefaan Vaes.

\bibitem{jekel2024upgradedfreeindependencephenomena}
David Jekel and Srivatsav~Kunnawalkam Elayavalli.
\newblock Upgraded free independence phenomena for random unitaries, 2024.

\bibitem{jolissaint}
Paul Jolissaint.
\newblock Examples of mixing subalgebras of von {N}eumann algebras and their normalizers.
\newblock {\em Bull. Belg. Math. Soc. Simon Stevin}, 19(3):399--413, 2012.

\bibitem{JungSB}
Kenley Jung.
\newblock Strongly 1-bounded von {N}eumann algebras.
\newblock {\em Geom. Funct. Anal.}, 17(4):1180--1200, 2007.

\bibitem{patchellelayavalli2023sequential}
Srivatsav Kunnawalkam~Elayavalli and Gregory Patchell.
\newblock Sequential commutation in tracial von {N}eumann algebras.
\newblock {\em arXiv preprint arXiv:2311.06392}, 2023.

\bibitem{Bleary}
Brian Leary.
\newblock Maximal amenability with asymptotic orthogonality in amalgamated free products.
\newblock {\em J. Operator Theory}, 86(1):17--29, 2021.

\bibitem{magee2024strongasymptoticfreenesshaar}
Michael Magee and Mikael de~la Salle.
\newblock Strong asymptotic freeness of haar unitaries in quasi-exponential dimensional representations, 2024.

\bibitem{OzawaSolidActa}
Narutaka Ozawa.
\newblock Solid von {N}eumann algebras.
\newblock {\em Acta Math.}, 192(1):111--117, 2004.

\bibitem{OzAbsor}
Narutaka Ozawa.
\newblock A remark on amenable von {N}eumann subalgebras in a tracial free product.
\newblock {\em Proc. Japan Acad. Ser. A Math. Sci.}, 91(7):104, 2015.

\bibitem{parraud2024spectrumtensorrandomdeterministic}
Félix Parraud.
\newblock The spectrum of a tensor of random and deterministic matrices, 2024.

\bibitem{PetersonThom}
Jesse Peterson and Andreas Thom.
\newblock Group cocycles and the ring of affiliated operators.
\newblock {\em Invent. Math.}, 185(3):561--592, 2011.

\bibitem{Pop81kadison}
Sorin Popa.
\newblock On a problem of {R.V. K}adison on maximal abelian $\ast$-subalgebras in factors.
\newblock {\em Inventiones mathematicae}, 65:269–--281, 1981.

\bibitem{Popa1983MaximalIS}
Sorin Popa.
\newblock Maximal injective subalgebras in factors associated with free groups.
\newblock {\em Advances in Mathematics}, 50:27--48, 1983.

\bibitem{Popa93}
Sorin Popa.
\newblock Markov traces on universal {J}ones algebras and subfactors of finite index.
\newblock {\em Invent. Math.}, 111(2):375--405, 1993.

\bibitem{PopaStrongRigidity}
Sorin Popa.
\newblock Strong rigidity of {$\rm II_1$} factors arising from malleable actions of {$w$}-rigid groups. {I}.
\newblock {\em Invent. Math.}, 165(2):369--408, 2006.

\bibitem{popa2008strong}
Sorin Popa and Stefaan Vaes.
\newblock Strong rigidity of generalized bernoulli actions and computations of their symmetry groups.
\newblock {\em Advances in Mathematics}, 217(2):833--872, 2008.

\bibitem{VDN1992}
Dan-Virgil Voiculescu, Kenneth~J. Dykema, and Alexandru Nica.
\newblock {\em Free Random Variables}, volume~1 of {\em CRM Monograph Series}.
\newblock American Mathematical Society, Providence, 1992.

\bibitem{WenAOP}
Chenxu Wen.
\newblock Maximal amenability and disjointness for the radial masa.
\newblock {\em J. Funct. Anal.}, 270(2):787--801, 2016.

\end{thebibliography}

\end{document}